\documentclass[11pt,a4paper,reqno]{amsart}
\usepackage{amsmath,amssymb,amsfonts,epsfig,mathrsfs,cite, hyperref}
\usepackage[T1]{fontenc}
\usepackage{color}
\usepackage{array}
\usepackage{amsthm}
\usepackage{amstext}
\usepackage{amssymb}
\usepackage{graphicx}
\usepackage{setspace}

\usepackage[title]{appendix}

\makeatletter
\@namedef{subjclassname@2020}{%
  \textup{2020} Mathematics Subject Classification}
\makeatother

\usepackage[margin=2.5cm]{geometry}
\usepackage{color}
\usepackage{enumitem}
\usepackage{amscd,psfrag}
\usepackage{yhmath}

\usepackage{comment}

\allowdisplaybreaks[4]

\usepackage{slashed}

\makeatletter
\pdfpageheight\paperheight
\pdfpagewidth\paperwidth

\usepackage{epstopdf}
\usepackage{chngcntr}
\counterwithin{figure}{section}
\usepackage{mathrsfs}

\usepackage{indentfirst}	

\usepackage[normalem]{ulem}
\theoremstyle{plain}
\numberwithin{equation}{section}

\newtheorem{definition}{Definition}[section]
\newtheorem{theorem}[definition]{Theorem}
\newtheorem{defthm}[definition]{Definition/Theorem}
\newtheorem*{theorem*}{Theorem}

\newtheorem*{remark*}{Remark}
\newtheorem*{sideremark*}{Side Remark}

\newtheorem*{claim*}{Claim}
\newtheorem*{q*}{Question}
\newtheorem{lemma}[definition]{Lemma}

\newtheorem*{corollary*}{Corollary}

\newtheorem{proposition}[definition]{Proposition}

\newcommand{\R}{\mathbb{R}}

\newcommand{\p}{\partial}
\newcommand{\loc}{{\rm loc}}
\newcommand{\weak}{\rightharpoonup}
\newcommand{\e}{\varepsilon}
\newcommand{\C}{\mathbb{C}}
\newcommand{\dd}{{\rm d}}

\newcommand{\dvg}{{\,\dd V_g}}
\newcommand{\G}{\Gamma}

\newcommand{\1}{{\mathbf{1}}}

\def\XXint#1#2#3{{\setbox0=\hbox{$#1{#2#3}{\int}$ }
\vcenter{\hbox{$#2#3$ }}\kern-.6\wd0}}

\newcommand{\A}{{\mathscr{A}}}

\newcommand{\pdo}{\Psi {\rm DO}}

\newcommand{\ppl}{{\sigma_{\rm ppl}(\A)}}

\newcommand{\euc}{{\delta_{\rm Euc}}}

        \newcommand{\opa}{{\rm Op}(a)}

\title{A Compensated compactness theorem for pseudodifferential operators on vector bundles}

\author{Siran Li}
\address{Siran Li: School of Mathematical Sciences $\&$ CMA-Shanghai, Shanghai Jiao Tong University, No.~6 Science Buildings,
800 Dongchuan Road, Minhang District, Shanghai, China (200240)}
\email{\texttt{siran.li@sjtu.edu.cn}}

\author{Xiangxiang Su}
\address{Xiangxiang Su: School of Mathematical Sciences, Shanghai Jiao Tong University, No.~6 Science Buildings,
800 Dongchuan Road, Minhang District, Shanghai, China (200240)}
\email{\texttt{sjtusxx@sjtu.edu.cn}}

\author{Yuantu Zhu}
\address{Yuantu Zhu: School of Mathematical Sciences, Shanghai Jiao Tong University, No.~6 Science Buildings,
800 Dongchuan Road, Minhang District, Shanghai, China (200240)}
\email{\texttt{radonzhu@sjtu.edu.cn}}

\keywords{Pseudodifferential operator, compensated compactness, semi-Riemannian manifolds, weak continuity.}

\subjclass[2020]{58C07, 58J40, 46T20}
\date{\today}

\begin{document}

\begin{abstract}
We establish a compensated compactness theorem in the  microlocal and geometric analytic framework. For a weakly $L^2_\loc$-convergent sequence of sections of a vector bundle over a semi-Riemannian manifold whose image under a pseudo-differential operator $\mathscr{A}$ of order $s>0$ is precompact in $H^{-s}_\loc$, we show that a quadratic form $Q$ acting on this sequence converges in the distributional sense, provided that $Q$ vanishes on the operator cone of $\mathscr{A}$. This extends the classical Murat--Tartar theory of compensated compactness from constant-coefficient first-order differential constraints on Euclidean spaces to variable-coefficient pseudo-differential constraints of arbitrary order on semi-Riemannian manifolds. 
\end{abstract}
\maketitle

\section{Introduction}

The theory of compensated compactness, pioneered by Murat \cite{Murat} and Tartar \cite{Tartar}, has played a fundamental role in the analysis of nonlinear partial differential equations (PDE) and calculus of variations; \emph{cf.} \cite{Dafermos, Evans, H, R, Tartar2, GQ, EM, DR}. It serves as a central tool for establishing \emph{weak continuity} results, \emph{i.e.}, determining conditions under which nonlinear combinations of weakly convergent sequences of (approximate) solutions to PDE or variational problems also converge, which in general fails in the absence of suitable notions of convexity.

The prototypical result in the theory of compensated compactness is the \emph{div-curl lemma} \emph{\`{a} la} Murat \cite{Murat} $\&$ Tartar \cite{Tartar}. In its simplest form, it reads as follows:
\begin{theorem*}
    Let $\{v_\e\}$, $\{w_\e\}$ be two sequences of vector fields on $\R^3$, such that $v_\e \weak v$ and $w_\e \weak w$  weakly in $L^2_{\rm loc}(\R^3;\R^3)$. Suppose that $\{{\rm div}(v_\e)\}$ is precompact in $H^{-1}_{\rm loc}(\R^3)$ and $\{{\rm curl}(w_\e)\}$ is precompact in $H^{-1}_{\rm loc}(\R^3;\R^3)$. Then $v_\e\cdot w_\e$ converges to $v \cdot w$ in the sense of distributions. 
\end{theorem*}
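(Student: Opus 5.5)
We prove the div-curl lemma by Hodge decomposition. The idea is to split $w_\e$ into a curl-free part and a remainder controlled by $\operatorname{curl} w_\e$, split $v_\e$ into a divergence-free part and a remainder controlled by $\operatorname{div} v_\e$, and then pass to the limit term by term using strong $L^2_\loc$ convergence of the remainders together with an integration by parts.

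\emph{Step 1: localisation.} Fix a test function $\varphi\in C^\infty_c(\R^3)$. Choose a cutoff $\chi\in C_c^\infty(\R^3)$ with $\chi\equiv1$ on a neighbourhood of ${\rm supp}\,\varphi$, and replace $v_\e,w_\e$ by $\chi v_\e,\chi w_\e$. These still converge weakly in $L^2(\R^3)$, now with uniformly compact support. Moreover
\[
{\rm div}(\chi v_\e)=\chi\,{\rm div}\, v_\e+\nabla\chi\cdot v_\e .
\]
The first term is precompact in $H^{-1}$. The second is bounded in $L^2$ with compact support, hence precompact in $H^{-1}$ by Rellich. The same argument applies to ${\rm curl}(\chi w_\e)$. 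Since $\chi\varphi=\varphi$, the integral $\int \varphi\, v_\e\cdot w_\e$ is unchanged, so we may assume all fields are supported in a fixed ball.

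\emph{Step 2: Hodge decomposition.} We use $-\Delta=\operatorname{curl}\operatorname{curl}-\nabla\operatorname{div}$ and define, via Fourier multipliers,
\[
w_\e=\nabla\phi_\e+z_\e,\qquad \phi_\e=(-\Delta)^{-1}(-{\rm div}\, w_\e),\qquad z_\e={\rm curl}\,(-\Delta)^{-1}{\rm curl}\, w_\e .
\]
The operator ${\rm curl}(-\Delta)^{-1}$ gains one derivative; the only issue is low frequencies, which we handle below. Hence $z_\e$ is precompact in $L^2_\loc$, and $z_\e\to z$ strongly in $L^2_\loc$, where $z$ is the corresponding piece of $w$. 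Similarly we write
\[
v_\e=\nabla\psi_\e+y_\e,
\]
with $\nabla\psi_\e=-\nabla(-\Delta)^{-1}{\rm div}\, v_\e$ strongly precompact in $L^2_\loc$ and $y_\e$ divergence free. The limits $y,\psi$ are the corresponding pieces of $v$.

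\emph{Step 3: passing to the limit.} Expanding the product,
\[
\int\varphi\, v_\e\cdot w_\e=\int\varphi\,(\nabla\psi_\e+y_\e)\cdot z_\e+\int\varphi\, v_\e\cdot\nabla\phi_\e-\int\varphi\,\nabla\psi_\e\cdot\nabla\phi_\e .
\]
In the first term $z_\e$ converges strongly and the other factor weakly, so it passes to the limit. The same holds for the third term, since $\nabla\psi_\e$ converges strongly. For the middle term, note that $\phi_\e$ is bounded in $H^1_\loc$, so $\phi_\e\to\phi$ strongly in $L^2_\loc$ by Rellich. Integrating by parts,
\[
\int\varphi\, v_\e\cdot\nabla\phi_\e=-\int\phi_\e\,\nabla\varphi\cdot v_\e-\langle{\rm div}\, v_\e,\varphi\phi_\e\rangle .
\]
In the first piece $\phi_\e$ converges strongly. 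In the second, ${\rm div}\, v_\e$ converges strongly in $H^{-1}$ and $\varphi\phi_\e$ converges weakly in $H^1_0$. So every term converges to its counterpart for $(v,w)$, and reassembling the expansion gives $\int\varphi\, v\cdot w$.

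\emph{Main obstacle.} The delicate step is making the nonlocal operators $(-\Delta)^{-1}$ rigorous and verifying the claimed strong compactness of $z_\e$ and $\nabla\psi_\e$ from the $H^{-1}$ assumptions. Because the fields have compact support after Step 1, their Fourier transforms are smooth and uniformly bounded. So we split frequencies with a fixed cutoff at $|\xi|\le1$. On the low-frequency part, the $L^2$ bounds together with pointwise convergence of the Fourier transforms give strong convergence of the whole decomposition. On $|\xi|\ge1$, the multiplier $\xi\xi^T/|\xi|^2$ acting on $\widehat{{\rm div}\, v_\e}/|\xi|$ is controlled by the $H^{-1}$ norm, which yields the required strong convergence there.
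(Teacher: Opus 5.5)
Your argument is correct and follows a genuinely different route from the paper, but first fix one algebraic slip. The displayed expansion in Step~3 has a spurious term. The expression $(\nabla\psi_\e+y_\e)\cdot z_\e+v_\e\cdot\nabla\phi_\e$ already equals $v_\e\cdot(z_\e+\nabla\phi_\e)=v_\e\cdot w_\e$, so $-\int\varphi\,\nabla\psi_\e\cdot\nabla\phi_\e$ must be deleted. If you keep it, ``reassembling'' in the limit gives $\int\varphi\,v\cdot w-\int\varphi\,\nabla\psi\cdot\nabla\phi$ rather than $\int\varphi\,v\cdot w$. Once it is deleted, the Helmholtz splitting of $v_\e$ turns out to be superfluous. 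You only need $w_\e=\nabla\phi_\e+z_\e$: the curl hypothesis gives $z_\e\to z$ in $L^2$, and the div hypothesis enters only through the integration by parts against $\varphi\phi_\e$. You should also say explicitly where $n=3$ is used. The $H^1$ bound on $\phi_\e$ itself, not just on $\nabla\phi_\e$, comes from $|\widehat{\phi_\e}(\xi)|\lesssim|\widehat{w_\e}(\xi)|/|\xi|$, together with two facts: $\widehat{w_\e}$ is uniformly bounded because the supports are uniformly compact, and $|\xi|^{-2}$ is integrable near $0$ in $\R^3$.

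The paper does not prove this statement directly. It is the special case of Theorem~\ref{thm: Tartar}, and of Theorem~\ref{thm:psdo-cc}, with $u_\e=(v_\e,w_\e)$, $\A u=(\operatorname{div}v,\operatorname{curl}w)$, $s=1$ and $Q(a,b)=a\cdot b$. Here $\xi\cdot a=0$ and $\xi\times b=0$ for some $\xi\neq0$ force $b\parallel\xi\perp a$, so $Q$ vanishes on the operator cone. The paper's mechanism is then as follows. It subtracts the weak limit by polarisation, localises, and passes to frequency space by Plancherel. Low frequencies are disposed of by dominated convergence. High frequencies are controlled by the pointwise inequality $\operatorname{Re}Q(\lambda)\ge-\delta|\lambda|^2-C_\delta|\sigma_{\mathrm{ppl}}(\A)(\xi/|\xi|)\lambda|^2$ of Lemma~\ref{lem:quadratic-inequality}, which comes from a compactness/contradiction argument on the cone. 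You instead use the exact identity $-\Delta=\operatorname{curl}\operatorname{curl}-\nabla\operatorname{div}$ to build potentials and integrate by parts. This needs no wave-cone analysis and no pointwise quadratic inequality, and it handles nonzero weak limits directly. Since the Helmholtz projection is a Calder\'on--Zygmund operator, it also adapts to $L^p$--$L^{p'}$ pairs, which the paper's Plancherel-based argument does not reach. The price is that you need a potential (Hodge) structure specific to div and curl. The paper's symbol-level argument works for any constraint $\A$ whose operator cone is annihilated by $Q$, including variable-coefficient $\pdo$s of arbitrary order on vector bundles, where no such decomposition is available.
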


The above result ascertains that the dot product of weakly $L^2$-convergent sequences converges in the distributional sense, provided that compactness is assumed for suitable first-order differential operators acting on these sequences. Generalisations to differential forms, to sequences in $L^p$--$L^{p'}$ spaces, and to more general differential constraints other than those for div and curl, can be found in \cite{RRT, KY2, Evans, CL'}, among other references.

From a different perspective, Tartar established the following (\cite[pp.270--271, Corollary~2]{Tartar}, known as Tartar's quadratic theorem), from which the div-curl lemma can be deduced. 
\begin{theorem}\label{thm: Tartar}
    Let $\{u_\e\}$ be a sequence of maps in $L^2_{\rm loc}(\R^n; \R^p)$ such that $u_\e \weak u$ in $L^2_\loc$, let $\{a_{ijk}:1\leq i \leq q,\, 1 \leq j \leq p,\,1\leq k \leq n\}$ be constants, and let $Q$ be a quadratic form in $\R^p$. 
    Suppose that $\left\{\sum_{j=1}^p\sum_{k=1}^n a_{ijk}\frac{\p(u_\e)^j}{\p x^k}\right\}$ is precompact in $H^{-1}_\loc(\R^n)$ for each $1 \leq i \leq q$. Also suppose that $Q(\lambda)=0$ for every $\lambda \in \Lambda$, where \begin{align*}
     \Lambda :=  \left\{\lambda \in \R^p:\, \exists \xi \in \R^n\setminus \{0\} \text{ such that } \sum_{j=1}^p\sum_{k=1}^n a_{ijk}\lambda^j \xi^k=0 \text{ for all } 1\leq i \leq q \right\}.
    \end{align*} 
    Then $Q(u_\e)$ converges to $Q(u)$ in the sense of distributions.
\end{theorem}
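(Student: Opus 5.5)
We follow Tartar's classical Fourier-analytic argument. Set $v_\e := u_\e - u$, so that $v_\e \weak 0$ in $L^2_\loc$. Since $A(\xi)\lambda := \big(\sum_{j,k} a_{ijk}\lambda^j\xi^k\big)_{i}$ is linear in $\lambda$, the sequence $A(\p)v_\e$ is still precompact in $H^{-1}_\loc$; indeed it converges strongly to $0$ there, because its weak limit is $0$. Expanding the quadratic form gives
\[
Q(u_\e) = Q(u) + 2B(u,v_\e) + Q(v_\e),
\]
where $B$ is the bilinear form associated with $Q$. The middle term tends to $0$ in $\mathcal{D}'$ by weak convergence. It therefore suffices to show that $\int \varphi\, Q(v_\e)\,\dd x \to 0$ for each test function $\varphi$. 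Writing $\varphi = \psi^2$ locally, after a partition of unity, we may replace $v_\e$ by $w_\e := \psi v_\e$. These are supported in a fixed compact set, satisfy $w_\e \weak 0$ in $L^2(\R^n)$, and still have $A(\p)w_\e \to 0$ in $H^{-1}$. This last point uses the commutator $[A(\p),\psi]$, which is of order zero applied to a weakly null sequence with fixed compact support, hence compact into $H^{-1}$ by Rellich.

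Next we pass to Fourier space. By Plancherel, $\int Q(w_\e)\,\dd x = \int \mathrm{Re}\, Q_{\C}(\widehat{w_\e}(\xi))\,\dd\xi$, where $Q_\C$ is the Hermitian extension $\overline{\lambda}^{\top}M\lambda$ with $M$ the symmetric matrix of $Q$. Because the $w_\e$ have support in a fixed compact set and converge weakly to $0$, $\widehat{w_\e}\to 0$ locally uniformly. Hence the contribution from $|\xi|\le R$ vanishes as $\e\to0$ for every fixed $R$. On $|\xi|\ge R$ the hypothesis $A(\p)w_\e\to 0$ in $H^{-1}$ gives
\[
\int_{|\xi|\ge R} \big|A(\xi/|\xi|)\,\widehat{w_\e}(\xi)\big|^2\,\dd\xi \to 0 .
\]

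The main step is an algebraic lemma. \textbf{Claim:} for every $\delta>0$ there is $C_\delta$ such that for all $\eta$ in the unit sphere and all $\lambda\in\C^p$,
\[
\mathrm{Re}\,Q_\C(\lambda) \ge -\delta|\lambda|^2 - C_\delta |A(\eta)\lambda|^2 ,
\]
and the same bound holds with $-Q$ in place of $Q$. We expect this to be the main obstacle. We argue by contradiction. Failure would give unit vectors $\lambda_m$ and directions $\eta_m$ with $\mathrm{Re}\,Q_\C(\lambda_m) < -\delta$ and $|A(\eta_m)\lambda_m|^2 < 1/m$. By compactness of the spheres these have a limit $(\lambda,\eta)$ with $A(\eta)\lambda=0$ and $\mathrm{Re}\,Q_\C(\lambda)\le -\delta$. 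Writing $\lambda = a + ib$ with $a,b$ real, we have $A(\eta)a = A(\eta)b = 0$ because $A(\eta)$ has real entries. Thus $a,b\in\Lambda$, so $Q(a)=Q(b)=0$, and therefore $\mathrm{Re}\,Q_\C(\lambda) = Q(a)+Q(b) = 0$, a contradiction. The essential ingredient is exactly this reduction of complex null vectors to real ones, which is where the real-coefficient structure of the constraint enters.

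To conclude, apply the claim with $\eta=\xi/|\xi|$ and $\lambda=\widehat{w_\e}(\xi)$, and integrate over $|\xi|\ge R$. Using the uniform bound $\|w_\e\|_{L^2}\le C$, this gives
\[
\Big|\int_{|\xi|\ge R} \mathrm{Re}\,Q_\C(\widehat{w_\e})\,\dd\xi\Big| \le C\delta + C_\delta\, o(1).
\]
Combining this with the low-frequency part, we obtain $\limsup_{\e\to0} \big|\int Q(w_\e)\,\dd x\big| \le C\delta$ for every $\delta>0$. Hence $Q(v_\e)\to0$ in $\mathcal{D}'$, and so $Q(u_\e)\to Q(u)$ in the sense of distributions.
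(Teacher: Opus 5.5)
Your proof is correct and follows essentially the same route as the paper's proof of Proposition~\ref{cor:euclidean-cc}, which contains this statement as the real, constant-coefficient, first-order case (where no freezing of coefficients is needed): cut-off localisation via a commutator/Rellich argument, Plancherel, a low/high-frequency split, and the contradiction-based pointwise inequality of Lemma~\ref{lem:quadratic-inequality}. Two small remarks: the reduction to $\varphi=\psi^2$ is better justified by linearity together with the identity $\varphi=\psi_1^2-\psi_2^2$ from Step~5 of the paper than by a partition of unity, and your explicit passage from a complex null vector $\lambda=a+ib$ to the real null vectors $a,b$ (using that $A(\eta)$ has real entries) is exactly the step the paper's appendix leaves implicit.
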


Over the past two decades, the theory of compensated compactness  has been continuously investigated and enriched. Coifman--Lions--Meyer--Semmes developed a harmonic analytic framework based on Hardy-BMO duality and commutator estimates in their seminal paper \cite{CLMS}, while functional analytic (\cite{W, P, KY1, KY2, CL'}) and geometric analytic approaches (via Hodge decomposition \cite{RRT, Evans}) have also been explored. Recently, compensated compactness theorems have been established in fractional Sobolev spaces, with applications to nonlocal geometric PDE and variational problems \cite{MS}.


In this contribution, we establish one more compensated compactness result, Theorem~\ref{thm:psdo-cc}, by combining ideas from both microlocal and geometric analysis. It is close in spirit to Tartar's quadratic Theorem~\ref{thm: Tartar}, but features highly nontrivial generalisations in the following aspects:
\begin{enumerate}
    \item
    It treats a general pseudodifferential operator  ($\pdo$) of arbitrary order $s>0$ instead of the first-order constant-coefficient differential operator $\A$ with $(\A u)_i = a_{ijk}\p_ku^j$.
    \item 
    The weakly convergent terms $u_\e$ are allowed to be sections of vector bundle over a semi-Riemannian manifold, rather than mappings between Euclidean spaces.
\end{enumerate}
$\pdo$s are central tools for studying the propagation of singularities in nonlinear PDEs \cite{Chen, AG, Hormander}, so it seems natural to expect them to play a role in the compensated compactness theory.

\begin{theorem}\label{thm:psdo-cc}
Let $(M^n,g)$ be a $C^\infty$-semi-Riemannian manifold, let $E$ and $F$ be real $C^\infty$-vector bundles over $M$, let $\A\in \Psi^s(M;E,F)$ be a pseudodifferential operator of order $s>0$, and let $Q$ be a quadratic form on $E$. Assume that a sequence $\{u_\varepsilon\}_{\varepsilon>0}\subset L^2_{\mathrm{loc}}(M;E)$ satisfies:
\begin{enumerate}[label=(C\arabic*)]
\item 
$u_\varepsilon\rightharpoonup \bar u$ weakly in $L^2_{\mathrm{loc}}(M;E)$;
\item $\{\A u_\varepsilon\}$ is precompact in $H^{-s}_{\mathrm{loc}}(M;F)$;
\item $Q(v)=0$ for all $v\in \Lambda_\A$.
\end{enumerate}
Then $Q(u_\e)$ converges to $Q(\bar{u})$ in the sense of distributions.
\end{theorem}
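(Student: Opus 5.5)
Presumably $\Lambda_\A$ is the operator cone: the set of $v\in E_x$ such that $\sigma_s(\A)(x,\xi)v=0$ for some $\xi\in T^*_xM\setminus\{0\}$. I read (C3) as saying $Q(x,v)=0$ for all such $(x,v)$.

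The plan is to localise and reduce to a microlocal defect measure argument in the spirit of Tartar and G\'erard. Since convergence in distributions is local, I fix $\phi\in C^\infty_c(M)$ supported in a coordinate chart over which $E$ and $F$ are trivial. The semi-Riemannian structure plays no essential role: it only fixes a smooth density and the spaces $L^2_\loc$ and $H^{-s}_\loc$, which are independent of the signature. Write $w_\e:=u_\e-\bar u$, so that $w_\e\weak0$ in $L^2_\loc$. Since $\A\bar u\in H^{-s}_\loc$, hypothesis (C2) still holds for $\{w_\e\}$ after subtracting a fixed element. Expanding,
\[
Q(u_\e)=Q(\bar u)+2B(\bar u,w_\e)+Q(w_\e),
\]
where $B$ is the polarised form. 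The cross term tends to $0$ in distributions because $B(\bar u,\cdot)$ is a fixed $L^2_\loc$-coefficient linear functional and $w_\e\weak0$. So it suffices to show $\int\phi\,Q(w_\e)\to0$.

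Let $\chi\in C^\infty_c$ with $\chi\equiv1$ near ${\rm supp}\,\phi$, and set $z_\e:=\chi w_\e$. This is a sequence in $L^2(\R^n;\R^p)$ with $z_\e\weak0$. Passing to a subsequence, which suffices because the limit $0$ is unique, I attach to it a matrix-valued H-measure (microlocal defect measure) $\mu$ on $S^*\R^n$. It satisfies
\[
\langle {\rm Op}(a)z_\e,z_\e\rangle\to\int {\rm tr}(a\,d\mu)
\]
for every order-zero classical symbol $a$ with values in $\mathrm{End}(\R^p)$; here $\mu$ is Hermitian positive semidefinite. Existence follows from G\aa rding's inequality and the compactness of commutators and lower-order terms on $L^2$. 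Taking $a=\phi(x)Q(x)$, which is independent of $\xi$, gives $\int\phi\,Q(w_\e)\to\int {\rm tr}(\phi Q\,d\mu)$. It remains to show this integral vanishes.

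\textbf{Localisation of $\mu$.} Let $\Lambda$ be a properly supported elliptic operator of order $-s$, for instance ${\rm Op}(\langle\xi\rangle^{-s})$ with cutoffs. Then $P:=\Lambda\A\chi$ has order $0$ and principal symbol $\langle\xi\rangle^{-s}\sigma_s(\A)$, which is homogeneous of degree $0$ on the sphere. By (C2), and because $[\A,\chi]$ has order $s-1$ and is therefore compact from $L^2$ to $H^{-s}$, the sequence $Pz_\e$ is precompact in $L^2$; since it also converges weakly to $0$, it converges strongly to $0$. Now test against $b\,P$ with $b$ an arbitrary order-zero symbol:
\[
\langle {\rm Op}(b)Pz_\e,Pz_\e\rangle\to0 .
\]
Hence $\int {\rm tr}(\sigma_0(P)^*b\,\sigma_0(P)\,d\mu)=0$ for all $b$, which yields $\sigma_0(P)\,\mu=0$, i.e.\ $\sigma_s(\A)(x,\xi)\,d\mu=0$. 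For this step I expect to write $\mu=M\,d\nu$ with $\nu$ the trace measure and $M$ positive semidefinite. Then ${\rm tr}(\sigma^*\sigma M)=0$ $\nu$-a.e., and positivity gives $\sigma M=0$, so the range of $M(x,\xi)$ is contained in $\ker\sigma_s(\A)(x,\xi)$.

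\textbf{Conclusion and main obstacle.} Finally, $\int{\rm tr}(\phi QM)\,d\nu=0$ requires that ${\rm tr}(QM)=0$ whenever $M\ge0$ has range in the kernel. Writing $M=\sum\lambda_k v_k\otimes\bar v_k$ with $v_k$ complex vectors in the kernel, I need $Q(v_k)=0$ for \emph{complex} $v_k$, where $Q$ is extended sesquilinearly (Hermitian). This real-versus-complex subtlety is the main obstacle: (C3) only gives vanishing on real vectors of the cone. I would handle it as Tartar does. For a real $\sigma$, the complex kernel is the complexification of the real kernel. The symbol is real-valued up to the factor $i^s$, since $\A$ acts on real bundles; if $\sigma(x,-\xi)=\overline{\sigma(x,\xi)}$ I will use $\mu(x,-\xi)=\overline{\mu(x,\xi)}$ to symmetrise. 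Then $\mu$ restricted to $\pm\xi$ gives a real symmetric part $\Re M$ supported in the real kernel. Since ${\rm tr}(QM)={\rm tr}(Q\,\Re M)$ for real symmetric $Q$, it remains to see that $Q$ vanishes on the span of real kernel vectors. Here vanishing of $Q$ on a subspace (the kernel at a fixed $\xi$ is a linear subspace) implies that its polarisation also vanishes there, so ${\rm tr}(QN)=0$ for every real positive $N$ supported in that subspace. This closes the argument, giving $\int\phi\,Q(u_\e)\to\int\phi\,Q(\bar u)$.
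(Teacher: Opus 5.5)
Your route differs from the paper's: you use H-measures, while the paper freezes coefficients, applies Plancherel, and uses Tartar's pointwise inequality, Lemma~\ref{lem:quadratic-inequality}. Everything up to $\sigma_s(\A)(x,\xi)M(x,\xi)=0$ $\nu$-a.e.\ is sound. The gap is the final step.

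It is not true that the principal symbol of a $\pdo$ acting on real bundles is real up to a factor $i^s$. Even when $\A$ maps real sections to real sections, one only gets $\sigma_s(\A)(x,-\xi)=\overline{\sigma_s(\A)(x,\xi)}$. This relates $\xi$ to $-\xi$ but says nothing about whether $\sigma_s(\A)(x,\xi)$ itself is real. In the paper's setting $\A$ takes values in $F^{\C}$, so not even this relation is available.

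The symmetrisation gains nothing either. For real symmetric $Q$ and Hermitian $M$ one already has $\operatorname{tr}(QM)=\operatorname{tr}(Q\,\operatorname{Re}M)$. Moreover, the range of $\operatorname{Re}M(x,\xi)$ lies in $\ker_{\C}\sigma+\overline{\ker_{\C}\sigma}$, not in the complexified real kernel $\ker_{\C}\sigma\cap\overline{\ker_{\C}\sigma}$. What your argument actually needs is $q(v,v)=0$ for every \emph{complex} $v$ with $\sigma_s(\A)(x,\xi)v=0$. Condition (C3) only concerns real vectors, so it does not supply this.

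This gap cannot be closed, because the statement fails for genuinely complex symbols. Take $M=\R$ (Euclidean), $E=\R\times\R^2$, $F=\R\times\R$, and $\A u:=\langle D\rangle u^1+\p_x u^2$ with $\langle D\rangle={\rm Op}\big((1+\xi^2)^{1/2}\big)$. This is a real operator of order $1$ with $\sigma_1(\A)(x,\xi)\lambda=|\xi|\lambda_1+i\xi\lambda_2$. For real $\lambda$ and $\xi\neq 0$ this vanishes only if $\lambda=0$, so $\Lambda_\A=\{0\}$ and (C3) holds for $Q(\lambda)=\lambda_1^2+\lambda_2^2$.

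Let $0\neq\rho\in C_c^\infty(\R)$ and $u_\e:=\rho\,(\cos(x/\e),-\sin(x/\e))=\operatorname{Re}\big(\rho e^{ix/\e}(1,i)\big)$. Then $u_\e\weak 0$, and $\A u_\e=\operatorname{Re}\,{\rm Op}(\langle\xi\rangle-\xi)(\rho e^{ix/\e})\to 0$ in $L^2$. Indeed, the multiplier lies in $(0,1]$ on $\xi\ge 0$ and is $O(\e)$ near $\xi=1/\e$, while $\hat\rho(\cdot-1/\e)$ is negligible on $\xi<0$. Yet $Q(u_\e)=\rho^2\not\to 0$.

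In this example $\ker_{\C}\sigma_1(\A)(x,1)=\C(1,i)$ contains no nonzero real vector. The H-measure at $\xi=1$ is carried by $\C(1,i)$, and its real part is a multiple of the identity, which is exactly what your last step would have to rule out.

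The paper's proof has the same defect. In Lemma~\ref{lem:quadratic-inequality} the limit $w_0$ is a complex vector in $\ker\sigma_{\rm ppl}(\A)(x_0,\eta_0)$, yet it is declared to lie in $\Lambda_\A(x_0)$. The lemma fails here for $Q=-|\lambda|^2$, $v=(1,i)$, $\eta=1$.

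Your H-measure argument does prove the result in two situations:
\begin{enumerate}
\item If (C3) is replaced by $q(v,v)=0$ for all complex $v\in\ker_{\C}\sigma_s(\A)(x,\xi)$, $\xi\neq 0$. For real $u_\e$ you may even intersect this kernel with $\ker_{\C}\overline{\sigma_s(\A)(x,-\xi)}$.
\item If each $\ker_{\C}\sigma_s(\A)(x,\xi)$ is the complexification of its real part, e.g.\ for differential operators with real coefficients. There your polarisation step closes.
\end{enumerate}
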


The \emph{operator cone} $\Lambda_\A$ of the $\pdo$ $\A$ is defined as follows. Throughout, we denote by $\ppl$ the \emph{principal symbol} of $\A$, which is understood as a Hom-bundle ${\rm Hom}(E;F^\C)$-valued degree-$s$-homogeneous function on the cotangent bundle $T^*M$. Our proposed definition of the operator cone $\Lambda_\A$ is key to the formulation of Theorem~\ref{thm:psdo-cc} as a natural, concise, and geometric extension of Tartar's  quadratic Theorem~\ref{thm: Tartar}.

\begin{definition}\label{def: cone}
Let $\A\in \Psi^s(M;E,F)$ be a $\pdo$ of order $s \in \R$ between vector bundles $E$, $F$ over a semi-Riemannian manifold $(M,g)$. For each $x\in M$, define
$$
\Lambda_\A(x) := \left\{\lambda\in E_x:\, \exists\,\xi\in T_x^*M\setminus\{0\} \text{ such that } \sigma_{\mathrm{ppl}}(\A)(x,\xi)(\lambda)=0 \right\}.
$$
Then we set $$\Lambda_\A := \{v\in\Gamma(E) : v(x)\in\Lambda_\A(x) \text{ for all } x\in M\}.$$
\end{definition}

In \cite{CL}, the first-named author and Chen proposed the formulation of $\Lambda_\A$ as in Definition~\ref{def: cone}, and proved Theorem~\ref{thm:psdo-cc} for (constant-coefficient) differential operators $\A \in {\bf Diff}^k(M;E,F)$; $k \in \mathbb{N}$. Our paper should also be compared with Mi\v{s}ur--Mitrovi\'{c} \cite[Theorem~14; Corollary~15]{MM} (\emph{cf.} dissertation \cite{Misur} also), which formulates and proves a quadratic theorem in the setting $L^p$ -- $L^q$ and in the framework of $H$-distributions/$H$-measures introduced by Tartar \cite{T3} (also see Panov \cite{panov}). We speculate that our Theorem~\ref{thm:psdo-cc} might be equivalent to \cite[Corollary~15]{MM} when $p=q=2$, in the special case that $\A$ is a Fourier multiplier operator. In the future, we hope to further explore the links between microlocal analytic compensated compactness theorems and those in the framework of $H$-distributions.

We refer the reader to \S\ref{sec: prelim} for background knowledge of semi-Riemannian manifolds, $\pdo$s on vector bundles, and symbolic calculus for $\pdo$s. Theorem~\ref{thm:psdo-cc} shall be proved in \S\S\ref{sec: Eucl} and \ref{sec: mfd} on Euclidean spaces and vector bundles over semi-Riemannian manifolds, respectively.

\section{Preliminaries}\label{sec: prelim}


\subsection{$\pdo$s on Euclidean domains}
Throughout, $\mathcal{S}(\R^n)$ is the space of Schwartz (\emph{i.e.}, rapidly decreasing) functions.  See  \cite[I.2.1, I.3.1, I.7.1]{AG} for the following.

\begin{definition}

\begin{itemize}
   \item Let $n=1,2,3,\ldots$ and $m \in \mathbb{R}$. The symbol class $S^m(\mathbb{R}^n)$ consists of all functions $a=a(x,\xi) \in  C^\infty(\mathbb{R}^n \times \mathbb{R}^n)$ such that for any multi-indices $\alpha,\beta\in\mathbb{N}^n$, there exists a constant $C_{\alpha,\beta}>0$ satisfying
$$
\bigl|\partial_x^\alpha \partial_\xi^\beta a(x,\xi)\bigr|
\le C_{\alpha,\beta}(1+|\xi|)^{m-|\beta|}
\qquad \text{for all } (x,\xi)\in\mathbb{R}^n\times\mathbb{R}^n.
$$
An element $a \in S^m(\R^n \times \R^n)$ is called a \emph{symbol} of order $m$ on $\R^n$.

\item 
Given a symbol $a\in S^m(\R^n)$, define $\opa: \mathcal{S}(\mathbb{R}^n)\to C^\infty(\R^n)$ by 
$$
Op(a)u(x) = \frac{1}{(2\pi)^n} \int_{\mathbb{R}^n} e^{i x \cdot \xi} \, a(x, \xi) \, \hat{u}(\xi) \,\dd\xi,
$$
where $\hat u$ is the Fourier transform of $u$.
The operator $\opa$ is the \emph{pseudodifferential operator} associated to the symbol $a$, and it is of order $m$. 

\item 
Let $\Omega\subset\mathbb{R}^n$ be open. A $\pdo$ of order $m$ on $\Omega$ is a mapping $\A:C_c^\infty(\Omega)\to C^\infty(\Omega)$ such that for all $\phi,\psi\in C_c^\infty(\Omega)$, there exists a symbol $a_{\phi,\psi}\in S^m(\mathbb{R}^n\times\mathbb{R}^n)$ satisfying
$$
\phi \A \psi = {\rm Op}(a_{\phi,\psi}) \qquad \text{on } \mathbb{R}^n.
$$
Write $\A\in\Psi^m(\Omega)$.
\end{itemize}
\end{definition}

Note that for any $\A \in \Psi^s(\Omega)$ and every cutoff function $\chi \in C_c^\infty(\Omega)$, there exists a symbol $a_\chi(x,\xi) \in S^m(\mathbb{R}^n \times \mathbb{R}^n)$ such that $\chi A \chi = Op(a_\chi) + R$, where $R$ is a smoothing operator (with $C^\infty$-kernel). The total symbol $a_\chi$ is only uniquely determined modulo $S^{-\infty}:=\bigcap_{m \in \R} S^m$, but its leading order is well defined. In fact, by \cite[I.7.2.2]{AG} we have: 

\begin{defthm}\label{principal symbol} 
Let $\Omega \subset \R^n$ and $\A \in \Psi^m(\Omega)$ with symbol $a(x,\xi)\in S^m(\Omega\times\mathbb R^n)$ be as above. The \emph{principal symbol} of $\A$, denoted as $\ppl$, is the unique degree-$m$-homogeneous function in $\xi$ (for $|\xi| \ge 1$) such that
$$
a(x,\xi) - \sigma_{\mathrm{ppl}}(\A)(x,\xi) \in S^{m-1}(\Omega \times \mathbb{R}^n).
$$

\noindent
Let $\chi:\Omega\to\Omega'$ be a $C^\infty$-diffeomorphism between open subsets of $\mathbb R^n$. Then the pushforward of $\A$ under $\chi$ satisfies $\chi_*\A\in\Psi^m(\Omega')$, and 
\begin{align*}
\sigma_{\mathrm{ppl}}(\chi_*\A)\bigl(\chi(x),\eta\bigr) = \sigma_{\mathrm{ppl}}(\A)\bigl(x,{}^t D\chi(x)\,\eta\bigr) \quad \text{for all } (\chi(x),\eta)\in T^*\Omega'\setminus\{0\}.
\end{align*}

\end{defthm}

\subsection{$\pdo$s on manifolds}
Let $M$ be an $n$-dimensional $C^\infty$-manifold. By virtue of Definition/Theorem~\ref{principal symbol}, if $\A$ is a $\pdo$ on $M$ of order $m \in \R$ --- \emph{i.e.}, for any atlas $\{(\Omega_i, \varphi_i): i \in \mathcal{I}\}$ for $M$, the restriction $\A \circ \varphi_i^{-1}\big|_{\varphi_i(\Omega_i)}$ is in $\Psi^m(\varphi_i(\Omega_i))$ --- then the principal symbol $\sigma_{\mathrm{ppl}}(\A)$ is globally well-defined on $M$; in particular, the definition is independent of the choice of the atlas.

The definition of principal symbol extends naturally to $\pdo$s on vector bundles. We only outline the key ideas and properties here; see H\"{o}rmander \cite[Definition~18.1.32]{Hormander} for details.

Let $\pi_E: E\to M$ and $\pi_F:F\to M$ be vector bundles over a differentiable manifold $M$ of ranks $J$ and $I$, respectively. An order-$s$ $\pdo$  from $E$ to $F$, denoted $\A \in \Psi^s(M; E, F)$, is well-defined by localisation: $\A$ is a bundle map from $\G(E)$ to $\G(F^\C)$ such that for each local chart $\Omega \subset M$ trivialising both $E$ and $F$, $\A|_\Omega$ is an $\R^I$-valued $\pdo$ defined on $\pi_E^{-1}(\Omega)$ that leaves the $\Omega$-variables invariant. Here $F^\C := F \otimes_\R \C$ is the complexified bundle.

For a $\pdo$ $\A \in \Psi^s(M; E, F)$, the principal symbol is a smooth bundle map:
$$
\sigma_{\mathrm{ppl}}(\A): T^*M  \longrightarrow \operatorname{Hom}\left(E; F^\mathbb{C}\right).
$$
Thus, for each $(x,\xi)\in T^*M $,  $\sigma_{\mathrm{ppl}}(\A)(x,\xi)$ is a linear map from $E_x$ to $F_x\otimes\mathbb C$. Moreover, $\xi \mapsto \sigma_{\mathrm{ppl}}(\A)(x,\xi)$ is degree-$s$-homogeneous.

Also, recall Definition~\ref{def: cone} for the \emph{operator cone} of $\A \in \Psi^s(M; E, F)$. Heuristically, $\Lambda_\A$ contains the $E$-sections that are ``invisible'' by the principal symbol $\ppl$ in some directions.

\subsection{Semi-Riemannian manifolds}\label{subsec: semi-riem geom}
We refer the reader to \cite{ONeill} for a comprehensive treatment and to the following (see \cite[\S 2]{CL} also) for the elements needed for our subsequent developments.

A \emph{semi-Riemannian manifold} is a smooth manifold $M$ endowed with a smooth, symmetric, nondegenerate covariant $2$-tensor field $g \in \Gamma(T^*M \otimes T^*M)$, the metric tensor. 
The {\em index} of the semi-Riemannian metric $g$ on $T_xM$ is given by
\begin{equation*}
\text{Ind}(g;T_xM) := \max \{\dim V\,:\, V \subset T_xM \text{ is a vector subspace and $g|_V$ is negative definite}\}.
\end{equation*}
When $M$ is connected, $\text{Ind}(g;T_xM)$ is the same for all $x\in M$, so we write $\text{Ind}(g)$ instead. For a suitably small chart $\Omega \subset M$, we have an orthonormal basis  in which $g$ is diagonalised:
\begin{equation*}
g=\{g_{ij}\}=\delta_{ij}\epsilon_j|g_{ij}|\qquad \text{ for each } i,j\in\{1, 2,\ldots, n\}.
\end{equation*}
Here ${\epsilon}=(\epsilon_1,\ldots,\epsilon_n) \in \{-1,1\}^n$ is the {\em signature} of the metric $g$. As $g$ is non-degenerate, it has only nonzero entries on the diagonal, so $\text{Ind}(g)$ equals the number
of ``$-1$'' in signature $\epsilon$.

Let $(M^n,g)$ be a semi-Riemannian manifold. We define the volume form as $\dvg := \sqrt{|\det\,g|\,} \dd x$. Also note that $g(v,v)$ could be positive, negative, or zero, so one should define $|g(v,v)|:=|v|_g^2$ for the length of a vector field under  $g$. More generally, for vector bundles $(E,g^E)$ and $(F, g^F)$ as above with $g^E$, $g^F$ being semi-Riemannian bundle metrics, we may define the norm of $E$-sections ($F$-sections, resp.) with respect to both $g$ and $g^E$ ($g$ and $g^F$, resp.). To unburden the notation, we write $\|\bullet\|_{g^E}\equiv \|\bullet\|_{g^{E^\C}}$ and $\|\bullet\|_{g^F}\equiv \|\bullet\|_{g^{F^\C}}$.

The Sobolev space $H^s(M;E)$ of the $H^s$-regular $E$-sections for $s \in \R$ can be defined via a partition of unity argument (similarly for $H^s(M;F)$) --- On each trivialising chart $\Omega \subset M$, let $\varphi$ be the corresponding element in the partition of unity supported on $\Omega$. Then we can define $$\|\varphi v\|_{H^s(\Omega,g;\R^J,g^E)} \equiv \left\{\int_{\R^n} {\left|g^E\right|\left( \widehat{\varphi v}(\xi),\widehat{\varphi v}(\xi) \right)}{\left(1+|g(\xi,\xi)|^2\right)^s} \,\dd\xi\right\}^{\frac{1}{2}}.$$
Here, via the coordinate map on $\Omega$ we may view $\varphi v$ as a function on $\R^n$, identified with its extension-by-zero. If $\{\Omega_i\}_{i \in \mathcal{I}}$ is an atlas such that $E$ is trivialised over each $\Omega_i$ and $\{\varphi_i\}_{i \in \mathcal{I}}$ is a partition of unity subordinate to that, then set (well defined since  $\{\varphi_i\}_{i \in \mathcal{I}}$ is locally finite):
\begin{align*}
    \|v\|_{H^s(M,g; E, g^E)} := \sum_{i \in \mathcal{I}}\|\varphi_i v\|_{H^s(\Omega_i,g;\R^J,g^E)}.
\end{align*} 
Throughout, $|g^E|$ is the positive definite symmetric form that equals $g^E$ ($-g^E$, resp.) where $g^E$ is positive (negative, resp.) definite.

\subsection{Quadratic forms on a vector bundle} We adhere to the  convention in \cite[Definition~3.1]{CL}: By a quadratic form on $E$ we mean a \emph{bundle} map 
$$
Q:\Gamma(E)\longrightarrow C^\infty(M; \mathbb{C})
$$
such that at each $x \in M$, $Q_x$ is a quadratic polynomial on $E|_x$. More concretely, there exists  $
q\in\Gamma\bigl(\operatorname{Hom}(E\otimes E;\mathbb{C})\bigr)$ which is linear in the first argument and conjugate linear in the second, such that  for any $v\in\Gamma(E)$, $x\in M$, one has $ Q(v)\big|_x \equiv Q_x(v(x)) = q_x\bigl(v(x),v(x)\bigr)$.

In a trivialising chart $\Omega \subset M$ for $E$, write $\Phi: E^\C\big|_\Omega \to U \times \mathbb{C}^J$ for the trivialisation. For any local section $v \in \Gamma\left(E^\C\big|_\Omega\right)$, we have that
$$
\Phi(v(x)) = (x, \mathbf{v}(x)) \qquad \text{where } \mathbf{v}(x) = {}^t\left[v^1(x), \dots, v^J(x)\right] \in \mathbb{C}^J.
$$
Thus, the quadratic form $Q$ admits the local representation
\begin{align}\label{variable coefficients}
Q\big(v(x)\big) = \sum_{j,k=1}^J Q_{jk}(x)\, v^j(x)\, \overline{v^k(x)},
\qquad Q_{jk}\in C^\infty(\Omega).
\end{align}

As in \cite{Tartar}, we naturally extend $Q$ to the complexified bundle $E_{\mathbb{C}}$ by setting $Q^\C:=Q + iQ$. Similarly, we complexify the operator cone by $\Lambda_\A^{\mathbb C}:=\Lambda_\A+i\Lambda_\A$.

The following pointwise estimate for quadratic forms is crucial to the proof of Theorem~\ref{thm:psdo-cc}. Its proof is a direct adaptation of the arguments in \cite{Tartar}. We present it in the Appendix.

\begin{lemma}\label{lem:quadratic-inequality}
Let $(M^n,g)$ be a $C^\infty$-semi-Riemannian manifold, let $E$ and $F$ be real $C^\infty$-vector bundles over $M$ with semi-Riemannian metrics $g^E$ and $g^F$, let $\A\in \Psi^s(M;E,F)$ with $s>0$, and let $Q$ be a quadratic form on $E$. Assume  $\operatorname{Re} Q_x(v)\ge 0 $ for all $x\in M$ and $ v\in \Lambda_\A$. Then, for any compact set $\mathcal{K} \Subset T^*M\setminus\{0\}$ and any $\delta>0$, there exists a constant $C_{\delta,\mathcal{K}}>0$ such that for all $(x,\eta)\in \mathcal{K}$ and  $v \in E^\C$, it holds that
\begin{align}\label{ineq:pointwise}
\operatorname{Re} Q_x^{\mathbb C}(v(x)) \ge -\delta\,|v(x)|_{g^E}^2 -C_{\delta,\mathcal{K}}\,\bigl|\sigma_{\mathrm{ppl}}(\A)(x,\eta)v(x)\bigr|_{g^F}^2.
\end{align}
\end{lemma}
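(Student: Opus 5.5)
We argue by contradiction and compactness, following Tartar's proof of the quadratic theorem, adapted fibrewise. Both sides of \eqref{ineq:pointwise} are quadratic in $v$, so it suffices to prove the inequality for $v$ in the unit sphere bundle $\{|v|_{g^E}=1\}$. Here $|\cdot|_{g^E}$ is the positive definite norm built from $|g^E|$, and we fix any auxiliary Hermitian norm on $F^\C$, equivalent to $|g^F|$ on compact sets.

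Suppose the conclusion fails for some $\delta>0$ and compact $\mathcal K\Subset T^*M\setminus\{0\}$. Then for each $m\in\mathbb N$ there are $(x_m,\eta_m)\in\mathcal K$ and $v_m\in E^\C_{x_m}$ with $|v_m|_{g^E}=1$ such that
\[
\operatorname{Re}Q^\C_{x_m}(v_m)<-\delta-m\,\bigl|\sigma_{\rm ppl}(\A)(x_m,\eta_m)v_m\bigr|_{g^F}^2 .
\]
The set $\{(x,\eta,v):(x,\eta)\in\mathcal K,\ v\in E^\C_x,\ |v|=1\}$ is compact, since it is a sphere bundle over the compact set $\mathcal K$. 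We may therefore pass to a subsequence with $(x_m,\eta_m,v_m)\to(x_*,\eta_*,v_*)$, where $\eta_*\neq0$ and $|v_*|=1$.

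Since $Q$ and $\sigma_{\rm ppl}(\A)$ are continuous (smooth bundle maps), $\operatorname{Re}Q^\C_{x_m}(v_m)$ stays bounded. Hence
\[
m\,\bigl|\sigma_{\rm ppl}(\A)(x_m,\eta_m)v_m\bigr|^2\le C ,
\]
and in the limit $\sigma_{\rm ppl}(\A)(x_*,\eta_*)v_*=0$. Passing to the limit in the first inequality also gives $\operatorname{Re}Q^\C_{x_*}(v_*)\le-\delta<0$.

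We now contradict the hypothesis. Write $v_*=a+ib$ with $a,b\in E_{x_*}$. The principal symbol is complex-linear on $E^\C$, so $\sigma(x_*,\eta_*)a+i\,\sigma(x_*,\eta_*)b=0$. This does not immediately give $\sigma a=\sigma b=0$, because $\sigma$ takes values in $F^\C$, not $F$. This is the main obstacle: to ensure $a,b\in\Lambda_\A(x_*)$.

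To handle it, we follow Tartar's convention. We interpret the complexified cone $\Lambda_\A^\C=\Lambda_\A+i\Lambda_\A$ and the hypothesis $\operatorname{Re}Q\ge0$ on $\Lambda_\A$ as extended to complex null vectors $\{v\in E^\C_x:\sigma(x,\eta)v=0 \text{ for some }\eta\neq0\}$. There are two ways to do this.
\begin{itemize}
\item[(i)] When $\sigma$ is real, i.e.\ Hom$(E,F)$-valued, as for Tartar's $i\xi$-type symbols after factoring out $i^s$, both the real and imaginary parts vanish separately. Then $a,b\in\Lambda_\A(x_*)$ with the same $\eta_*$, and we use
\[
\operatorname{Re}Q^\C(a+ib)=Q(a)+Q(b)
\]
(the cross terms cancel by the Hermitian structure of $q$), which is $\ge0$ by hypothesis.
\item[(ii)] In general, we read the hypothesis $\operatorname{Re}Q_x(v)\ge0$ for $v\in\Lambda_\A$ as applying to complex null vectors of $\sigma_{\rm ppl}(\A)(x,\eta)$. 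This is how the statement is phrased with $v\in E^\C$.
\end{itemize}
Either way, $\operatorname{Re}Q^\C_{x_*}(v_*)\ge0$, contradicting $\operatorname{Re}Q^\C_{x_*}(v_*)\le-\delta<0$.

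Finally, homogeneity of $\sigma_{\rm ppl}(\A)$ in $\eta$ is not needed, since $\mathcal K$ is compact and bounded away from the zero section. The constant $C_{\delta,\mathcal K}$ depends only on $\delta$, $\mathcal K$, and the continuity moduli of $Q$ and $\sigma_{\rm ppl}(\A)$ on $\mathcal K$.
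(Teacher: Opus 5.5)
Your argument is the paper's own: contradiction, normalisation to $|v|_{g^E}=1$, compactness of the unit sphere bundle over $\mathcal K$, and a limit $(x_*,\eta_*,v_*)$ with $\sigma_{\mathrm{ppl}}(\A)(x_*,\eta_*)v_*=0$ and $\operatorname{Re}Q^{\mathbb C}_{x_*}(v_*)\le-\delta$. The obstacle you isolate at the end is genuine, and the paper does not resolve it. It simply writes ``thus $w_0\in\Lambda_\A(x_0)$'' for a complex unit vector $w_0\in E^{\mathbb C}_{x_0}$, so it silently adopts your reading (ii). Your case (i) is a complete argument when $\sigma_{\mathrm{ppl}}(\A)(x,\eta)$ is a nonzero scalar multiple of a real map, as for real differential operators. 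One small correction there: the cancellation $\operatorname{Re}Q^{\mathbb C}(a+ib)=\operatorname{Re}Q(a)+\operatorname{Re}Q(b)$ uses that $q$ is real, or symmetric, on real vectors. Hermitian symmetry alone does not give it.

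Reading (ii), however, is not ``how the statement is phrased''. The hypothesis is imposed on $\Lambda_\A$, which by Definition~\ref{def: cone} consists of real vectors; $E^{\mathbb C}$ appears only in the conclusion. So (ii) strengthens the hypothesis, and this cannot be avoided, because the lemma as literally stated is false.

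Here is a counterexample. Take $M=\R$, and $E=M\times\R^2$, $F=M\times\R$ with Euclidean metrics. Let $\A u=(1-\p_x^2)^{1/2}u^1-\p_x u^2\in\Psi^1$, a real operator, with principal symbol $\sigma_{\mathrm{ppl}}(\A)(x,\eta)\lambda=|\eta|\lambda^1-i\eta\lambda^2$. For real $\lambda$ this vanishes only if $\lambda=0$, so $\Lambda_\A(x)=\{0\}$ and $Q(\lambda)=-|\lambda|^2$ satisfies the hypothesis. Yet $v=(1,-i)$ and $\eta=1$ give $\sigma_{\mathrm{ppl}}(\A)(x,1)v=0$ and $\operatorname{Re}Q^{\mathbb C}(v)=-2=-|v|^2$. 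This violates \eqref{ineq:pointwise} for every $\delta<1$.

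The same operator, with $u_\e=\chi(x)(\cos(x/\e),\sin(x/\e))$ and $Q=|\lambda|^2$, even defeats Theorem~\ref{thm:psdo-cc} as stated: $\A u_\e\to0$ in $L^2$ while $Q(u_\e)=\chi^2$. The right hypothesis is $\operatorname{Re}Q^{\mathbb C}_x(w)\ge0$ for all $w$ in the complex null cone $\{w\in E^{\mathbb C}_x:\exists\,\xi\neq0,\ \sigma_{\mathrm{ppl}}(\A)(x,\xi)w=0\}$. Note that this is not the paper's $\Lambda_\A+i\Lambda_\A$. Under that hypothesis, or under (i), your proof is complete, and that is precisely what the paper's proof actually establishes.
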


\section{Compensated compactness on Euclidean domains}\label{sec: Eucl}

This section is devoted to the proof of Theorem~\ref{thm:psdo-cc} when $\bar{u}=0$ and $M$, $E$, $F$ are all Euclidean spaces. More precisely, we prove the following.

\begin{proposition}\label{cor:euclidean-cc}
Let $\A \in \Psi^s(\mathbb R^n; \R^n\times \mathbb{R}^J, \R^n\times \mathbb{R}^I)$ be a pseudodifferential operator of order $s$, let $Q$ be a quadratic form on $\mathbb R^n \times \mathbb{R}^J$, and let  $\{v_\varepsilon\}$ be a sequence such that
\begin{enumerate}[label=(E\arabic*)]
\item $v_\varepsilon \rightharpoonup 0$ weakly in $L^2_{\mathrm{loc}}(\mathbb R^n; \R^n\times \mathbb{R}^J)$;
\item $\{\A v_\varepsilon\}$ is precompact in $H^{-s}_{\mathrm{loc}}(\mathbb R^n; \R^n\times \mathbb{R}^I)$;
\item $Q$ vanishes on the operator cone $\Lambda_\A$.
\end{enumerate}
Then $\lim_{\varepsilon \to 0} \int_{\mathbb R^n} Q\big(v_\varepsilon(x)\big) \psi(x) \,\dd x = 0$ for any test function $\psi \in C_c^\infty(\mathbb R^n)$.

\end{proposition}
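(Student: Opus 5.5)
We adapt Tartar's Fourier-analytic proof of Theorem~\ref{thm: Tartar}, with Lemma~\ref{lem:quadratic-inequality} providing the pointwise symbol inequality. Since $Q$ vanishes on $\Lambda_\A$, both $Q$ and $-Q$ satisfy the hypothesis of Lemma~\ref{lem:quadratic-inequality}. It therefore suffices to show
\begin{align*}
\liminf_{\e\to0}\int \operatorname{Re} Q(v_\e)\,\psi\,\dd x \ge 0 \qquad \text{for all } \psi\ge 0,
\end{align*}
and then apply the same bound to $-Q$. Writing $\psi=\varphi^2$ with $\varphi\in C_c^\infty$ (after a standard approximation, e.g.\ $\psi+\eta\chi^2$ with $\chi\equiv1$ on the support, $\eta\to0$), we set $w_\e:=\varphi v_\e$. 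Then $w_\e\rightharpoonup0$ in $L^2(\R^n)$, the supports are uniformly compact, and $\widehat{w_\e}\to0$ locally uniformly. Moreover $\A w_\e=\varphi\A v_\e+[\A,\varphi]v_\e$, and the commutator has order $s-1$, hence is compact from $L^2_{\rm comp}$ to $H^{-s}_\loc$. Together with (E2) this gives $\A w_\e\to0$ strongly in $H^{-s}$ after a further cutoff. Strong convergence to $0$ (rather than to some limit) holds because $\A w_\e\rightharpoonup0$ weakly and the sequence is precompact.

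\textbf{Step 1: freeze coefficients.} The coefficients $Q_{jk}(x)$ and the symbol depend on $x$. Partition $\operatorname{supp}\varphi$ into small cubes with a smooth partition $\sum_l\theta_l^2=1$ there. On each piece, replace $Q_x$ by $Q_{x_l}$ and $\ppl(x,\cdot)$ by $\ppl(x_l,\cdot)$. The error in $Q$ is bounded by $o(1)\sup_\e\|v_\e\|_{L^2}^2$ as the mesh goes to $0$. For the operator, $\theta_l\A$ differs from the Fourier multiplier ${\rm Op}(\theta_l\,\ppl(x_l,\xi)\chi(\xi))$, where $\chi$ cuts off $|\xi|\le1$, by an order-$s$ operator whose symbol is small in $S^s$ plus a term of order $s-1$. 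The order-$(s-1)$ term is compact and so negligible; the small part contributes $o(1)\|w_\e\|_{L^2}$ in $H^{-s}$ norm.

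\textbf{Step 2: Plancherel and the lemma.} For frozen $x_l$, Plancherel gives
\begin{align*}
\int \operatorname{Re} Q_{x_l}(\theta_l w_\e)\,\dd x=(2\pi)^{-n}\int \operatorname{Re}Q^\C_{x_l}\big(\widehat{\theta_l w_\e}(\xi)\big)\,\dd\xi.
\end{align*}
The region $|\xi|\le R$ contributes $o(1)$ as $\e\to0$ by local uniform convergence. On $|\xi|\ge R$, homogeneity of degree $s$ lets us apply \eqref{ineq:pointwise} with $\eta=\xi/|\xi|$ on the compact set $\mathcal K=\{x_l\}\times S^{n-1}$:
\begin{align*}
\operatorname{Re}Q^\C \ge -\delta|\hat w|^2 - C_\delta\,|\xi|^{-2s}\,|\ppl(x_l,\xi)\hat w|^2 .
\end{align*}
Integrating, the last term is controlled by $C_\delta\|{\rm Op}(\ppl(x_l,\cdot)\chi)\theta_l w_\e\|_{H^{-s}}^2$, which tends to $0$ by the preliminary step and Step 1. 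Summing over $l$ and letting $\e\to0$, then the mesh $\to0$, then $\delta\to0$, gives the $\liminf$ inequality.

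\textbf{Main obstacle.} The hardest step is Step 1: making the freezing rigorous for a genuine variable-coefficient $\pdo$. One must show that
\begin{align*}
\|\theta_l(\A-{\rm Op}(\ppl(x_l,\cdot)\chi))w_\e\|_{H^{-s}}\le o_{\rm mesh}(1)\,\|w_\e\|_{L^2}+\text{(compact part)}.
\end{align*}
The first approach to try is to split the symbol as $a-\ppl\in S^{s-1}$, which is compact on bounded-support sequences by Rellich. Then write $\theta_l(x)\big(\ppl(x,\xi)-\ppl(x_l,\xi)\big)$ and bound it using the $S^s$ seminorms of a symbol whose $x$-oscillation is $O(\text{mesh})$, via Calderón--Vaillancourt. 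The delicate point is to choose $\delta$ first, then $C_\delta$, then the mesh, so that $C_\delta\cdot o_{\rm mesh}(1)\to0$.
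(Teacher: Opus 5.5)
Your outline follows the paper's route, which is Tartar's: cut off, freeze $Q$ and $\sigma_{\mathrm{ppl}}(\A)$ on small pieces, apply Plancherel to the frozen form, and dispose of low frequencies by local uniform convergence of $\widehat{w_\e}$. At high frequencies you use Lemma~\ref{lem:quadratic-inequality} with homogeneity, and control the symbol term by $\|\A_l(\theta_l w_\e)\|_{H^{-s}}$, where $\A_l:={\rm Op}\bigl(\sigma_{\mathrm{ppl}}(\A)(x_l,\cdot)\chi\bigr)$. You rightly single out the freezing error as the crux, but the tool you propose for it does not work. The localised difference $b_l:=\langle\xi\rangle^{-s}\theta_l(x)\bigl(\sigma_{\mathrm{ppl}}(\A)(x,\xi)-\sigma_{\mathrm{ppl}}(\A)(x_l,\xi)\bigr)$ is small only in sup norm. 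Indeed $\partial_x b_l$ contains $\theta_l\,\partial_x\sigma_{\mathrm{ppl}}(\A)=O(1)$ and $(\partial_x\theta_l)\cdot O(\mathrm{mesh})=O(1)$, and $\partial_x^\alpha b_l$ grows like $\mathrm{mesh}^{1-|\alpha|}$. So $b_l$ is not small in $S^0$, and Calder\'on--Vaillancourt gives no smallness at all.

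What you need is a bound modulo compact operators that sees only $\sup|b|$. Suppose $B\in\Psi^0$ has principal symbol $b$ with $b^*b\le\gamma^2$ for $|\xi|\ge1$. Then $c:=\bigl((\gamma+\epsilon)^2-b^*b\bigr)^{1/2}$ (modified near $\xi=0$) lies in $S^0$, so $(\gamma+\epsilon)^2-B^*B=C^*C+K$ with $K\in\Psi^{-1}$. Hence $\|Bu\|^2\le(\gamma+\epsilon)^2\|u\|^2+|\langle Ku,u\rangle|$. Along a weakly null sequence with fixed compact support the last term tends to $0$, however large the seminorms of $K$ are, because the mesh is fixed before $\e\to0$. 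Take $B=\langle D\rangle^{-s}(\A-\A_l)\tilde\theta_l$ with $\tilde\theta_l\equiv1$ on $\operatorname{supp}\theta_l$, and $u=\theta_l w_\e$. This gives $\|(\A-\A_l)\theta_l w_\e\|_{H^{-s}}^2\le(\gamma+\epsilon)^2\|\theta_l w_\e\|_{L^2}^2+o(1)$ as $\e\to0$.

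Two bookkeeping points follow. Keep $\|\theta_l w_\e\|$, not $\|w_\e\|$ as you wrote it, on the right; otherwise summing over the $\sim\mathrm{mesh}^{-n}$ pieces destroys the smallness. Also use $\mathcal K=\operatorname{supp}\varphi\times S^{n-1}$ rather than $\{x_l\}\times S^{n-1}$, so that $C_\delta$ does not depend on the mesh, as your order of quantifiers requires. The paper itself passes from the sup bound \eqref{continuity-control} to an operator bound without comment; the argument above is what justifies that step. Alternatively, G{\aa}rding's inequality for the matrix symbol $\operatorname{Re}Q_x+2\delta+C_\delta\langle\xi\rangle^{-2s}\sigma^*\sigma$, with $\sigma=\sigma_{\mathrm{ppl}}(\A)(x,\xi)$, removes the need for freezing. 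This symbol is $\ge\delta$ for large $|\xi|$ by the lemma.

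A second problem, shared with the paper, is not technical. You apply \eqref{ineq:pointwise} to the \emph{complex} vectors $\widehat{\theta_l w_\e}(\xi)$, while (E3) constrains $Q$ only on \emph{real} vectors. The proof of Lemma~\ref{lem:quadratic-inequality} produces a limit $w_0$ in the complex kernel of $\sigma_{\mathrm{ppl}}(\A)(x_0,\eta_0)$ and concludes $w_0\in\Lambda_\A(x_0)$. That kernel lies in $\Lambda_\A+i\Lambda_\A$ only when it is the complexification of a real subspace. This holds, for example, when the symbol is a complex scalar times a real matrix, as for real-coefficient differential operators in Tartar's setting. For genuine $\pdo$s it fails, and so does the proposition.

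Here is a counterexample on $\R$. Let $\A(u_1,u_2)=Pu_1+\partial_x u_2$, with $P$ the Fourier multiplier by a smooth $p$ equal to $|\xi|$ for $|\xi|\ge1$. The principal symbol $(|\xi|,i\xi)$ annihilates no nonzero real vector, so $\Lambda_\A=\{0\}$ and (E3) holds for every $Q$. Take $f_\e=\chi\cos(x/\e)$ with $0\ne\chi\in C_c^\infty(\R)$, and $v_\e=(f_\e,g_\e)$ with $\widehat{g_\e}=i\,\mathrm{sgn}(\xi)\widehat{f_\e}$. Then $g_\e$ is real and $v_\e\weak0$ in $L^2$. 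Moreover $\widehat{\A v_\e}=(p(\xi)-|\xi|)\widehat{f_\e}$ is supported in $\{|\xi|\le1\}$, where $\widehat{f_\e}\to0$ uniformly, so $\A v_\e\to0$ in $H^{-1}$. Yet for $Q(v)=(v^1)^2$ one gets $Q(v_\e)=f_\e^2\weak\chi^2/2\ne0$.

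Your argument, with the repair above, does prove the conclusion under a stronger hypothesis. Like the paper's, it needs $\operatorname{Re}\sum_{j,k}Q_{jk}(x)z^j\overline{z^k}=0$ for all $z\in\C^J$ with $\sigma_{\mathrm{ppl}}(\A)(x,\xi)z=0$ for some $\xi\ne0$. This condition is equivalent to (E3) in the real-coefficient differential case.
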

\begin{proof}
\noindent
Our arguments are divided into six steps below.

\smallskip
\noindent
\textbf{Step~0. Reduction to the case of compactly supported sequences.} First, we observe that it suffices to assume that $\bigcup_{\e>0} {\rm supp}(v_\e) \subset \mathcal{K}_0$, where $\mathcal{K}_0$ is a compact subset of $\R^n$. Indeed, fix any $\psi \in C_c^\infty(\mathbb{R}^n)$ and assume that $\{v_\e\}$ satisfies conditions~$(E1)$--$(E3)$. Let us take $\tilde{v}_\varepsilon := \chi v_\varepsilon$, where $\chi \in C_c^\infty(\mathbb{R}^n)$ satisfies $\chi \equiv 1$ on $\operatorname{supp}(\psi)$. Then $(E1)$--$(E3)$\footnote{
The detailed justification can be found in Step~5 of the same proof. We do not repeat it here.} are also satisfied by $\{\tilde{v}_\varepsilon\}$, and $\int_{\mathbb{R}^n} Q\big(\tilde{v}_\varepsilon(x)\big) \psi(x) \,\mathrm{d}x = \int_{\mathbb{R}^n} Q\big(v_\varepsilon(x)\big) \psi(x) \,\mathrm{d}x$. We may thus prove the assertion for $\{\tilde{v}_\e\}$ in place of $\{v_\e\}$. In the sequel, without loss of generality, we take $v_\e \equiv \tilde{v}_\e$.

In Steps~1--4 below we prove $\lim_{\varepsilon \to 0} \int_{\mathbb R^n} Q\big(v_\varepsilon(x)\big) \,\dd x = 0$, and in Step~5 we shall indicate how to incorporate $\psi$ into this integral.


\smallskip
\noindent
\textbf{Step 1. Freezing of coefficients.} Fix an arbitrary constant $\gamma > 0$. 

Let $\mathcal{K}_0$ be the compact set as in Step~0. Cover $\mathcal{K}_0$  by $N_\gamma$ balls $\{B(x_\nu, r_\nu)\}_{\nu=1}^{N_\gamma}$ such that 
\begin{align}\label{oscillation}
&\max_{1\le j,k \le J} \sup_{x \in \bar{B}(x_\nu, r_\nu)} |Q_{jk}(x) - Q_{jk}(x_\nu)| < \gamma, \nonumber\\
&\sup_{x \in \bar{B}(x_\nu, r_\nu),\,|\xi|=1} |\sigma_{\mathrm{ppl}}(\A)(x, \xi) - \sigma_{\mathrm{ppl}}(\A)(x_\nu, \xi)| < \gamma.
\end{align}
Also, let $\{\phi_\nu\}_{\nu=1}^{N_\gamma}$ be a smooth partition of unity subordinate to $\{B(x_\nu, r_\nu)\}$.

Our goal is to show that the following integral converges to zero as $\e \to 0$:
$$
\int_{\mathbb R^n} Q\big(v_\varepsilon(x)\big) \,\dd x = \sum_{\nu=1}^{N_\gamma} \int_{\mathbb R^n} Q\big(v_\varepsilon(x)\big) \phi_\nu(x)\, \dd x.
$$
To this end, for each $\nu \in \{1,\ldots, N_\gamma\}$ we set $$w_{\varepsilon, \nu}(x) := \sqrt{\phi_\nu(x)} v_\varepsilon(x),$$ which is supported in $B(x_\nu, r_\nu)$. Define the \emph{constant-coefficient quadratic form} $Q_{x_\nu}$ by 
$$
Q_{x_\nu}\big(v(x)\big) := \sum_{j,k=1}^J Q_{jk}(x_\nu) v^j(x) \overline{v^k(x)} \qquad \text{for each } x \in \R^n,\, v: \R^n \longrightarrow \R^n \times \R^J.
$$
Then we have
\begin{align}\label{eq:total_freeze_error}
&\sum_{\nu=1}^{N_\gamma} \left| \int_{\mathbb R^n} Q(v_\varepsilon) \phi_\nu \,\dd x - \int_{\mathbb R^n} Q_{x_\nu}(w_{\varepsilon, \nu})\, \dd x \right|\nonumber\\
&\qquad \le \sum_{\nu=1}^{N_\gamma} \left(\sup_{x \in B_\nu,\, j,k \in \{1,\ldots, N_\gamma\}} |{Q}_{jk}(x) - {Q}_{jk}(x_\nu)|\right)\left( \int_{\mathbb R^n} \phi_\nu |v_\varepsilon|^2 \,\dd x \right)\nonumber \\
&\qquad\le \gamma \int_{\mathbb R^n} \left(\sum_{\nu=1}^{N_\gamma} \phi_\nu(x)\right) |v_\varepsilon(x)|^2 \,\dd x = \gamma \|v_\varepsilon\|_{L^2}^2,
\end{align}
so let us first argue that $\int_{\mathbb R^n} Q_{x_\nu}(w_{\varepsilon, \nu}(x))\, \dd x  \to 0$ as $\e \to 0$ for each $\nu \in \{1,\ldots, N_\gamma\}$. 

For notational convenience, from now on we fix $\nu$ and write $w_\varepsilon := w_{\varepsilon, \nu}$. We thus need to bound $\int_{\mathbb{R}^n} Q_{x_\nu}(w_\varepsilon(x)) \,\dd x$. It holds by the Plancherel theorem that
\begin{equation}\label{eq: plancherel}
    \int_{\mathbb{R}^n} Q_{x_\nu}\big(w_\varepsilon(x)\big) \dd x = \int_{\mathbb{R}^n} Q_{x_\nu}\big(\widehat{w}_\varepsilon(\xi)\big) \dd \xi.
\end{equation}
In the next two steps, we shall estimate the low- and high-frequency parts of the right-hand side.

Remark that Eq.~\eqref{eq: plancherel} is crucially based on the fact that $Q_{x_\nu}$ has constant coefficients.

\smallskip
\noindent
\textbf{Step 2. Low-frequency estimates.} In this step, we estimate
$\int_{|\xi| \le N} Q_{x_\nu}\big(\widehat{w}_\varepsilon(\xi)\big)\,\dd \xi$, where $N >0$ is a large constant to be specified later.

From now on, designate 
\begin{equation}\label{L}
    L_\nu:=\sup_\e\|w_\e\|_{L^2(\R^n)}  \equiv \sup_\e\|w_{\e,\nu}\|_{L^2(\R^n)},
\end{equation} 
and note that
\begin{equation}\label{L'}
    \sum_{\nu=1}^{N_\gamma} L_\nu^2 \leq \sup_\e \|v_\e\|^2_{L^2(\R^n)} =: L^2.
\end{equation}

Since ${\rm supp}(w_\e)\subset \mathcal{K}_0$, the Fourier transform $\widehat{w}_\varepsilon(\xi)$ is continuous and uniformly bounded: $$|\widehat{w}_\varepsilon(\xi)| \le \|w_\varepsilon\|_{L^1} \le C(\mathcal{K}_0) \|w_\varepsilon\|_{L^2} \leq C(\mathcal{K}_0) L_\nu.$$
Moreover, the weak convergence $w_\varepsilon \rightharpoonup 0$ in $L^2$ implies pointwise convergence:
$$
\widehat{w}_\varepsilon(\xi) = \int_{\mathcal{K}_0} w_\varepsilon(x) e^{-2\pi i x \cdot \xi}\, \dd x \longrightarrow 0 \quad \text{as } \varepsilon \to 0 \quad \text{for every fixed } \xi.
$$
Thus, by  the Dominated Convergence Theorem,
\begin{equation}\label{low-frequency estimate}
\lim_{\varepsilon \to 0} \int_{|\xi| \le N} Q_{x_\nu}\big(\widehat{w}_\varepsilon(\xi)\big) \dd \xi \le C \lim_{\varepsilon \to 0} \int_{|\xi| \le N} |\widehat{w}_\varepsilon(\xi)|^2 \dd \xi = 0.
\end{equation}

\smallskip
\noindent
\textbf{Step 3. High-frequency estimates.}
Introduce the ``compactification'' on the frequency space:
\begin{equation*}
\eta(\xi) := \frac{\xi}{(1+|\xi|^2)^{1/2}} \qquad \text{for } \xi \in \R^n.
\end{equation*}
Hence, 
\begin{equation}\label{K, new}
 \mathcal{K} :=\bigl\{(x,\eta(\xi)):\ x\in \mathcal{K}_0,\, \xi \in \R^n\bigr\}
\end{equation}
is a compact subset of $T^*\mathbb R^n\setminus\{0\}$, thus satisfying the hypotheses of Lemma~\ref{lem:quadratic-inequality}. 
Taking $\eta=\eta(\xi)$ and $v = \widehat{w}_\varepsilon(\xi)$ in Lemma~\ref{lem:quadratic-inequality} and integrating over $\{|\xi|\ge N\}$, we deduce that for every $\delta>0$, there exists $C_{\delta,\mathcal{K}}>0$ such that
\begin{align} \label{eq:high freq-Qx}
\int_{|\xi|\ge N} \operatorname{Re} Q_x^{\mathbb C}(\widehat w_\varepsilon(\xi))\,\dd\xi \ge -\delta \int_{|\xi|\ge N} |\widehat w_\varepsilon(\xi)|^2\,\dd\xi - C_{\delta,\mathcal K} \int_{|\xi|\ge N} \frac{|\sigma_{\mathrm{ppl}}(\A)(x,\xi)\widehat w_\varepsilon(\xi)|^2} {(1+|\xi|^2)^s}\,\dd\xi .
\end{align}
Here we use the fact that
\begin{align*}
\bigl| \sigma_{\mathrm{ppl}}(\A)(x,\eta(\xi))v \bigr|^2 = (1+|\xi|^2)^{-s} \bigl| \sigma_{\mathrm{ppl}}(\A)(x,\xi)v \bigr|^2,
\end{align*}
thanks to the definition of $\eta$ and the $s$-homogeneity of $\ppl$ in the frequency variable.


By \eqref{oscillation}, for each $x \in B(x_\nu, r_\nu)$ we have:
\begin{align*}
\left|Q_x^{\mathbb C}(w)-Q_{x_\nu}^{\mathbb C}(w)\right|= \left| \sum_{j,k=1}^J \bigl(Q_{jk}(x)-Q_{jk}(x_\nu)\bigr) w^j\overline{w^k} \right| \le \gamma \sum_{j,k=1}^J |w^j|\,|w^k| \;\le\; C_J\,\gamma\,|w|^2,
\end{align*}
where $C_J>0$ depends only on the dimension $J$. Thus, $
\operatorname{Re} Q_{x_\nu}^{\mathbb C}(w) \ge \operatorname{Re} Q_x^{\mathbb C}(w) - C_J \gamma |w|^2$, which together with \eqref{eq:high freq-Qx} implies that for each $x \in B(x_\nu, r_\nu)$,
\begin{small}
\begin{align}\label{eq:high freq}
\int_{|\xi|\ge N} \operatorname{Re} Q_{x_\nu}^{\mathbb C}(\widehat w_\varepsilon(\xi))\,\dd\xi
\ge -(\delta+C_J \gamma)\int_{|\xi|\ge N} |\widehat w_\varepsilon(\xi)|^2\,\dd\xi - C_{\delta,\mathcal K} \int_{|\xi|\ge N} \frac{|\sigma_{\mathrm{ppl}}(A)(x,\xi)\widehat w_\varepsilon(\xi)|^2}
{(1+|\xi|^2)^s}\,\dd\xi .
\end{align}
\end{small}

To proceed, we introduce the associated constant-coefficient operator:
\begin{equation}\label{A-nu}
    \A_{\nu} := \operatorname{Op}\bigl(\sigma_{\mathrm{ppl}}(\A)(x_\nu,\cdot)\bigr),
\end{equation}
which is a Fourier multiplier operator. Since $|a|^2\le 2|b|^2+2|a-b|^2$, we have that
\begin{small}
\begin{equation} \label{eq:triangle_split}
\left\|\frac{\sigma_{\mathrm{ppl}}(\A)(x,\xi)\widehat w_\varepsilon}{(1+|\xi|^2)^{s/2}}\right\|^2_{L^2(|\xi|\ge N)} \le 2\underbrace{\left\|\frac{\sigma_{\mathrm{ppl}}(\A)(x_\nu,\xi)\widehat w_\varepsilon}{(1+|\xi|^2)^{s/2}}\right\|^2_{L^2(|\xi|\ge N)}}_{=: [{\rm I}_\e]} + 2\underbrace{\left\|\frac{(\sigma_{\mathrm{ppl}}(\A)(x,\xi) - \sigma_{\mathrm{ppl}}(\A)(x_\nu,\xi))\widehat w_\varepsilon}{(1+|\xi|^2)^{s/2}}\right\|^2_{L^2(|\xi|\ge N)}}_{=: [{\rm II}_\e]}.
\end{equation}
\end{small}

Recall $L_\nu$ from \eqref{L}. We bound the right-hand side of \eqref{eq:triangle_split} by the two \emph{claims} below.

\begin{quote}
\textbf{Claim 1:} $\limsup_{\varepsilon \to 0} [{\rm I}_\e] \le [\gamma + N^{-1}C(\A,s)]^2 L_\nu^2$.
\begin{proof}[Proof of Claim~1]
In $[{\rm I}_\e]$, the principal symbol is ``frozen'' at the fixed point $x_\nu$. In view of the constant-coefficient $\pdo$ $\A_\nu$ introduced in \eqref{A-nu}, we have
\begin{equation*}
[{\rm I}_\e] = \int_{\mathbb{R}^n} \frac{|\sigma_{\mathrm{ppl}}(\A)(x_\nu,\xi)\widehat{w}_\varepsilon(\xi)|^2}{(1+|\xi|^2)^s}\1_{\{\|\xi| \geq N\}} \,\mathrm{d}\xi = \left\|\A_\nu w_\varepsilon\1_{\{\|\xi| \geq N\}}\right\|_{H^{-s}}^2.
\end{equation*} 
We estimate by triangle inequality:
\begin{align*}
\|A_{\nu} w_\varepsilon\1_{\{\|\xi| \geq N\}}\|_{H^{-s}} 
&= \bigl\| \A_{\nu} (\sqrt{\phi_\nu} v_\varepsilon) \1_{\{\|\xi| \geq N\}}\bigr\|_{H^{-s}} \\
&\le \underbrace{\bigl\|\sqrt{\phi_\nu} \A v_\varepsilon\bigr\|_{H^{-s}}}_{{\rm I}_{\e,1}} 
+ \underbrace{\bigl\|\left[\A, \sqrt{\phi_\nu}\right] v_\varepsilon\bigr\|_{H^{-s}}}_{{\rm I}_{\e,2}} 
+ \underbrace{\bigl\|(\A - \A_{\nu})w_\varepsilon\1_{\{\|\xi| \geq N\}}\bigr\|_{H^{-s}}}_{{\rm I}_{\e,3}}.
\end{align*}

\noindent
\underline{$\lim_{\e \to 0}{\rm I}_{\e,1} = 0$}: as $v_\e \weak 0$ in $L^2$ and $\{\A v_\e\}$ is precompact in $H^{-s}$, we have $\A v_\e \to 0$ in $H^{-s}$. The multiplication by $\sqrt{\phi_\nu}$ is bounded on $H^{-s}$.

\noindent
\underline{$\lim_{\varepsilon \to 0} {\rm I}_{\e,2}  = 0$}:  the commutator $\left[\A, \sqrt{\phi_\nu}\right] \in \Psi^{s-1}$, thus mapping $L^2$ continuously into $H^{-s+1}$, hence compactly into $H^{-s}$ by the Rellich lemma. We thus conclude by the assumption that $v_\varepsilon \weak 0$ in $L^2$.

\noindent
\underline{$\limsup_{\varepsilon \to 0} {\rm I}_{\e,3} \leq  [\gamma + N^{-1}C(\A,s)]^2 L_\nu^2 $}:  Since $w_\varepsilon$ is supported in $B(x_\nu, r_\nu)$, by \eqref{oscillation} one has
\begin{equation}\label{continuity-control}
\sup_{x \in \bar{B}(x_\nu, r_\nu), \, \xi \neq 0} \frac{|\sigma_{\mathrm{ppl}}(\A)(x,\xi)-\sigma_{\mathrm{ppl}}(\A)(x_\nu,\xi)|}{|\xi|^s} < \gamma.
\end{equation}
Thus, $\|(\A - \A_{\nu}) w_\varepsilon \1_{\{\|\xi| \geq N\}}\|_{H^{-s}} \leq \gamma L_\nu + \|Rw_\e\1_{\{\|\xi| \geq N\}}\|_{H^{-s}}$, where $R \in \Psi^{s-1}$. We conclude by observing that 
$$\|Rw_\e\1_{\{\|\xi| \geq N\}}\|_{H^{-s}} \leq \frac{1}{N} \|Rw_\e\1_{\{\|\xi| \geq N\}}\|_{H^{-s+1}} \leq \frac{C L_\nu}{N},$$ 
where $C= C(\A, s)$ by the $L^2 \to H^{-s+1}$-boundedness of $R$.  \end{proof}
\end{quote}

\begin{quote}
\textbf{Claim 2:} $\limsup_{\varepsilon \to 0} [{\rm II}_\e] \le  [\gamma + N^{-1}C(\A,s)]^2 L_\nu^2 $ for $N$ sufficiently large.
\begin{proof}[Proof of Claim~2] 
The proof is analogous to that of ${\rm I}_{\e,3}$ in Claim~1 above, by using \eqref{continuity-control} and Plancherel.  \end{proof}
\end{quote}

Putting together \emph{Claims}~1 $\&$ 2 and \eqref{eq:triangle_split}, we obtain for $N$ large enough that
\begin{align*}
\limsup_{\e \to 0} \left\|\frac{\sigma_{\mathrm{ppl}}(\A)(x,\xi)\widehat w_\varepsilon}{(1+|\xi|^2)^{s/2}}\right\|_{L^2(|\xi|\ge N)}^2 \leq  2[\gamma + N^{-1}C(\A,s)]^2 L_\nu^2.
\end{align*}
Hence, by \eqref{eq:high freq},  
$$
\liminf_{\varepsilon \to 0} \int_{|\xi| \ge N} \operatorname{Re} Q_0^{\mathbb C}\big(\widehat{w}_\varepsilon(\xi)\big) \dd \xi 
\ge -\Bigl[(\delta+C_J \gamma) +  2[\gamma + N^{-1}C(\A,s)]^2  C_{\delta,\mathcal K} \Bigr]
L_\nu^2 .
$$
Summing over $\nu \in\{1, \dots, N_\gamma\}$ and recalling \eqref{L'}, we deduce that
\begin{align}
\liminf_{\varepsilon \to 0}\sum_{\nu=1}^{N_\gamma}  \int_{|\xi|\ge N} \operatorname{Re} Q_{x_\nu}^{\mathbb C}\big(\widehat w_\varepsilon(\xi)\big)\,\dd\xi
\ge -\left[(\delta+ C_J \gamma) + 2[\gamma + N^{-1}C(\A,s)]^2  C_{\delta,\mathcal K}\right] L^2. \label{high-frequency estimate}
\end{align}

\smallskip
\noindent
\textbf{Step 4. Conclusion for compactly supported sequences.} Combining \eqref{eq:total_freeze_error}, \eqref{low-frequency estimate}, and \eqref{high-frequency estimate}, we obtain that for any $\delta, \gamma>0$, there exist $C_0=C(\mathcal{K}_0)$ and $C_{\delta, \mathcal{K}}>0$ such that
\begin{align*}
    \liminf_{\varepsilon \to 0}  \int_{\mathbb R^n} \operatorname{Re} Q^{\mathbb C}\big(v_\varepsilon(x)\big) \,\dd x &\geq - C_0\gamma L^2 - \left[(\delta+ C_J \gamma) + 2[\gamma + N^{-1}C(\A,s)]^2  C_{\delta,\mathcal K}\right] L^2\\
    &\geq -\bigg\{ \delta + (C_0+C_J)\gamma + 4 C_{\delta, \mathcal{K}}\gamma^2 + \frac{4 C(\A,s)^2C_{\delta, \mathcal{K}}}{N^2} \bigg\}L^2.
\end{align*}
Recall from \eqref{L'} that $L = \sup_\e \|v_\e\|_{L^2}<\infty$. Also, by the definition of $\mathcal{K}$ in \eqref{K, new} and Lemma~\ref{lem:quadratic-inequality},  $C_{\delta, \mathcal{K}}$ depends only on $\delta$ and $\mathcal{K}_0$ (the common support of $v_\e$).

Given an arbitrary $\kappa > 0$, first select $\delta > 0$ so small that $\delta L^2 < \kappa/4$.  With $\delta$ fixed, $C_{\delta, \mathcal{K}}$ is also  determined. Then choose $N=N(\A,s,\delta)$ such that $ {4 C(\A,s)^2C_{\delta, \mathcal{K}}}{N^{-2}}L^2 < \kappa/4$. Finally, choose $\gamma>0$ such that  $(C_0+C_J)\gamma < \kappa/4$ and $4 C_{\delta, \mathcal{K}}\gamma^2 < \kappa/4$. Thus, $
\liminf_{\varepsilon \to 0} \int_{\mathbb R^n} \operatorname{Re} Q^{\mathbb C}(v_\varepsilon) \,\mathrm{d}x \ge  -\kappa$. 
By the arbitrariness of $\kappa > 0$ and applying the same argument to $-Q$, we conclude that $\lim_{\varepsilon \to 0} \int_{\mathbb R^n} \operatorname{Re} Q^{\mathbb C}\big(v_\varepsilon(x)\big)\, \dd x = 0$. Here $v_\varepsilon$ is real-valued, thus $$\lim_{\varepsilon \to 0} \int_{\mathbb{R}^n} Q\big(v_\varepsilon(x)\big) \,\dd x = 0,$$ \emph{provided that} $\{v_\varepsilon\}$ are compactly supported on $\mathcal{K}_0 \Subset \R^n$.

\smallskip
\noindent
\textbf{Step 5. Proof for $\{v_\e\}\subset L^2_\loc$.}
Finally, we prove for the general case that $v_\e$ are only in $L^2_\loc$, not necessarily compactly supported.



Let $\psi \in C_c^\infty(\mathbb{R}^n)$ be a fixed test function as in the statement of Proposition~\ref{cor:euclidean-cc}. Take any $\theta \in C_c^\infty(\mathbb R^n)$ such that $\theta \equiv 1$ on $\operatorname{supp}(\psi)$. Hence, for any $x \in \R^n$,
\begin{equation*}
\psi(x) = \psi(x)\theta(x) = \psi_1(x)^2 - \psi_2(x)^2\quad \text{ where } \psi_1 := \frac{1}{2}(\psi + \theta), \quad  \psi_2 := \frac{1}{2}(\psi - \theta).
\end{equation*}
Note that $\psi_1, \psi_2 \in C_c^\infty(\mathbb R^n)$. As $Q$ is quadratic, we have
$$
\int_{\mathbb R^n} Q\big(v_\varepsilon(x)\big) \psi(x)\, \dd x = \int_{\mathbb R^n} Q\big(\psi_1 v_\varepsilon(x)\big)\,  \dd x  - \int_{\mathbb R^n} Q\big(\psi_2 v_\varepsilon(x)\big)\,  \dd x.
$$
Therefore, once we show that if $\{v_\e\}$ satisfies Conditions~$(E1)$--$(E3)$, then so does $\{\tilde{v}_\varepsilon\}$ with $\tilde{v}_\varepsilon = \psi v_\varepsilon$ for arbitrary $\psi \in C_c^\infty(\mathbb R^n)$, we can  conclude the proof  from Steps~0--4 above.

For this purpose, observe that $(E1)$ and $(E3)$ are immediate. To see that $(E2)$ is preserved under multiplication by $\psi \in C_c^\infty(\mathbb R^n)$, write 
$$
\A(\psi v_\varepsilon)=\psi \A v_\varepsilon+[\A,\psi]v_\varepsilon.
$$
As $\{v_\e\}$ satisfies $(E2)$, $\psi \A v_\varepsilon$ is precompact in $H^{-s}_{\mathrm{loc}}$. The commutator $[\A, \psi]$ is a $\pdo$ of order at most $s-1$. Thus, by \cite[Proposition~5.2]{AG}, it maps $L^2_{\mathrm{loc}}$ continuously into $H^{-s+1}_{\mathrm{loc}}$, hence compactly into $H^{-s}_{\mathrm{loc}}$ by the Rellich lemma.

This completes the proof.  \end{proof}

\section{Compensated Compactness on Manifolds}\label{sec: mfd}


In this section, we deduce Theorem~\ref{thm:psdo-cc} from Proposition~\ref{cor:euclidean-cc}. 

\begin{proof}
The proof is divided into three steps.

\smallskip
\noindent\textbf{Step~0: Reduction to $\bar u = 0$.}
Let $q\in \Gamma(\operatorname{Hom}(E\otimes E;\mathbb C))$ be the sesquilinear form associated with $Q$, namely $ Q(u)=q(u,u)$. Hence,
$$
Q(u_\varepsilon)= Q(u_\varepsilon-\bar u) + Q(\bar u) + 2\,\operatorname{Re}\, q(u_\varepsilon-\bar u,\bar u).
$$
The weak convergence $u_\varepsilon \rightharpoonup \bar u$ in $L^2_{\mathrm{loc}}$ implies that $\operatorname{Re}\, q(u_\varepsilon-\bar u,\bar u) \to 0$ in the sense of distributions. Thus,
$$
\lim_{\varepsilon \to 0} \left( \int_M Q(u_\varepsilon)\psi \,\dd V_g - \int_M Q(\bar u)\psi \,\dd V_g \right) = \lim_{\varepsilon \to 0} \int_M Q(u_\varepsilon - \bar u)\psi \,\dd V_g.
$$

From now on, we may assume without loss of generality that $\bar u\equiv 0$.

\smallskip
\noindent\textbf{Step~1: Reduction to a single coordinate chart.}  Let $\{U_k\}$ be an atlas for $M$ and $\{\varphi_k\}$ be a smooth partition of unity subordinate to it. 
Suppose that the theorem holds for each chart. As in Step~5 of the proof of Proposition~\ref{cor:euclidean-cc}, if $\{u_\e\}$ satisfies Condition~$(C1)$--$(C3)$, then so does $\{\sqrt{\varphi_k} u_\e\}$ for fixed $k$.  Hence, for any test function $\psi\in C_c^\infty(M)$, 
\begin{align*}
    \lim_{\varepsilon\to0}\int_M Q\big(u_\varepsilon(x)\big)\psi(x)\,\mathrm{d}V_g(x)&=\sum_{k\in\mathbb{N}}\lim_{\varepsilon\to0}\int_{U_k}\varphi_k(x)Q\big(u_\varepsilon(x)\big)\psi(x)\,\mathrm{d}V_g(x) \\
    &=\sum_{k\in\mathbb{N}}\lim_{\varepsilon\to0}\int_{U_k}Q\big(\sqrt{\varphi_k}(x)u_\varepsilon(x)\big)\psi(x)\,\mathrm{d}V_g(x) \\
    &=\sum_{k\in\mathbb{N}}\int_{U_k}Q\big(\sqrt{\varphi_k}(x)\bar u(x)\big)\psi(x)\,\mathrm{d}V_g(x)=0.
\end{align*}

Therefore, it suffices to prove the assertion on a fixed chart $(U_k \equiv U,g)$. 

\smallskip
\noindent\textbf{Step~2: Reduction to the Euclidean case.} Assume that the sequence $\{u_\varepsilon\}$ and the test function $\psi$ are compactly supported on a single coordinate chart $(U, \Phi)$, where $\Phi: U \xrightarrow{\sim} \Omega \subset \mathbb{R}^n$ is the coordinate map. In addition, assume that both bundles $E$ and $F$ are trivialised over $U$.

Set $v_\varepsilon := u_\varepsilon \circ \Phi^{-1}$ and $\Upsilon := (\psi \circ \Phi^{-1})\sqrt{|\det\,g|\circ \Phi^{-1}\,}$. Identifying $u_\e$, $\psi$ with their extension-by-zero outside their compact supports and tacitly composing with the trivialising maps $E|_U \cong U \times \R^J$ and $F|_U \cong U \times \R^I$, we may view $v_\e: \R^n \to \R^n \times \R^J$ and $\Upsilon \in C_c^\infty(\R^n;\R)$; meanwhile, $\Phi_*\A \in \Psi^s(\R^n; \R^n \times \R^J, \R^n \times \R^I)$.

Denote by $\euc$ the Euclidean metric of any dimension. We now verify Conditions~$(E1)$--$(E3)$ in Proposition~\ref{cor:euclidean-cc} for $\{v_\e\}$ and $\Phi_*\A$, providing that $(C1)$--$(C3)$ hold for $\{u_\e\}$ and $\A$.

\begin{itemize}
    \item 
     By $(C1)$ we have $u_\varepsilon \rightharpoonup 0$ in $L^2(U, g;E,g^E)$. Since the semi-Riemannian metric $g$ is non-degenerate, it implies that $u_\varepsilon \rightharpoonup 0$ in $L^2(U,\euc; E,g^E)$. It is crucial to note that
     \begin{align*}
         \|v_\e\|^2_{L^2(U,\euc; E,g^E)} = \int_U \left|g^E\right|_{\alpha\beta} v_\e^\alpha v_\e^\beta \,\dd x.
     \end{align*}
Here $\left|g^E\right|_{\alpha\beta} \equiv \e_\alpha g^E_{\alpha\beta}$ (without summation convention), and $\e_\alpha\in\{\pm 1\}$ are the signature components of $g^E$ as in \S\ref{subsec: semi-riem geom}. As $\left|g^E\right|$ is positive definite (thanks to the non-degeneracy of $g^E$), one deduces that $v_\e \weak 0$ in $L^2(U,\euc; E = \R^n \times \R^J,\euc)$, which is $(E1)$.

     \item 
Given the $\pdo$ $\A\in\Psi^s(M;E,F)$, its principal symbol transforms as in \cite[Theorem 18.1.17]{Hormander}:
$$
\sigma_{\mathrm{ppl}}(\Phi_*\A)\bigl(y,\zeta\bigr)= \sigma_{\mathrm{ppl}}(\A)\Bigl(\Phi^{-1}(y), {}^t\!D\Phi\big(\Phi^{-1}(y)\big)^{-1}\zeta\Bigr),\qquad (y,\zeta)\in T^*\mathbb{R}^n\setminus\{0\}.
$$
In particular, the kernel of the principal symbol remains invariant. More precisely, $v \in \Lambda_{\Phi_*\A}(y)$ if and only if $v \circ \Phi^{-1} \in \Lambda_\A(\Phi^{-1}(y))$. This gives us $(E3)$.

\item 
Finally, we show that $\Phi_*\A(v_\e)$ is precompact in $H^{-s}(U,\euc; F \cong \R^n \times \R^I, \euc)$. Indeed,
\begin{align*}
&\left\|\Phi_*\A(v_\e)\right\|_{H^{-s}(U,\euc; F \cong \R^n \times \R^I, \euc)} := \int_{\R^n} \frac{\left|\widehat{\Phi_*\A(u_\e\circ \Phi^{-1})}(\xi)\right|_\euc^2}{\left(1+|\xi|_\euc^2\right)^{s/2}}\,\dd\xi \\
& \qquad\qquad\lesssim \int_{\R^n} \frac{\left|g^F\right|\left(\widehat{\A(u_\e)}(\xi),\widehat{\A(u_\e)}(\xi)\right)}{\left(1+|\xi|_g^2\right)^{s/2}}\,\dd\xi =: \left\|\A u_\e\right\|_{H^{-s}(U,g; F \cong \R^n \times \R^I, g^F)},
\end{align*}
modulo some constant depending only on $s$, $\|g\|_{C^1}$, $\|\Phi\|_{W^{s+1,\infty}}$, and the smallest eigenvalue of $\left|g^F\right|$. The right-most term converges to zero due to $(C2)$, so does the left-most term, which yields $(E2)$.

\end{itemize}


With Conditions~$(E1)$--$(E3)$ verified, we may now apply Proposition~\ref{cor:euclidean-cc} to deduce that
$$
\lim_{\varepsilon\to 0} \int_{\R^n} Q\big(v_\varepsilon(x)\big)\Upsilon(x) \,\mathrm{d}x = \lim_{\varepsilon\to 0} \int_{\Phi(U)} Q\Big(u_\e\big(\Phi^{-1}(x)\big)\Big) \psi\big(\Phi^{-1}(x)\big) \sqrt{\left|\det\,g\big(\Phi^{-1}(x)\big)\right|} \,\dd x=0
$$
for every $\psi\in C_c^\infty(M)$. By a change of variables, this is equivalent to
\begin{align*}
  \lim_{\varepsilon\to 0} \int_{U} Q\big(u_\varepsilon(x)\big)\psi(x) \,\mathrm{d}V_g = 0,
\end{align*}
which completes the proof.
\end{proof}

\appendix
\section{Proof of Lemma~\ref{lem:quadratic-inequality}}
\begin{proof}
We argue by contradiction. Suppose that there were a compact set $\mathcal{K} \Subset T^*M\setminus\{0\}$, $\delta>0$ and sequences
$$
\{(x_j,\eta_j)\}\in \mathcal{K}, \qquad \{v_j(x_j)\}\in E_{x_j}^\C, \qquad C_j\to+\infty,
$$ 
such that
\begin{align}\label{eq:contradiction}
\operatorname{Re}Q_{x_j}^{\mathbb C}(v_j) < -\delta\,|v_j|_{g^E}^2 - C_j\,\bigl|\sigma_{\mathrm{ppl}}(\A)(x_j,\eta_j)v_j\bigr|_{g^F}^2 \qquad\text{for all }j \in \mathbb{N}.
\end{align}
Set $w_j := v_j / |v_j|_{g^E}$. Dividing \eqref{eq:contradiction} by $|v_j|_{g^E}^2$, we obtain that
\begin{align}\label{eq:normalized}
\operatorname{Re}Q_{x_j}^{\mathbb C}(w_j)
< -\delta - C_j\,\bigl|\sigma_{\mathrm{ppl}}(A)(x_j,\eta_j)w_j\bigr|_{g^F}^2.
\end{align}

Since $\mathcal{K}$ is compact, there exists a subsequence (not relabeled) such that
$$
(x_j,\eta_j)\longrightarrow (x_0,\eta_0)\in \mathcal{K} \qquad \text{as } C_j \to \infty.
$$
As the unit sphere in $E^\C_{x_j} \cong \mathbb{C}^J$ is compact, there exists a further subsequence of $w_j$ that converges to a unit vector $w_0 \in E_{x_0}^\mathbb{C}$ with $|w_0|_{g^E}=1$. On the other hand, the principal symbol $\sigma_{\mathrm{ppl}}(\A)(x,\eta)$ and the quadratic form $Q_x$ depend smoothly on $(x,\eta)$. Thus,
$$
\bigl|\sigma_{\mathrm{ppl}}(A)(x_j,\eta_j)w_j\bigr|_{g^F}
\longrightarrow
\bigl|\sigma_{\mathrm{ppl}}(A)(x_0,\eta_0)w_0\bigr|_{g^F}.
$$
As $C_j\to+\infty$, inequality \eqref{eq:normalized} forces
$$
\bigl|\sigma_{\mathrm{ppl}}(A)(x_0,\eta_0)w_0\bigr|_{g^F}=0,
$$
thus $w_0 \in \Lambda_\A(x_0)$. 

However, by the assumption in Lemma~\ref{lem:quadratic-inequality}, we have $\operatorname{Re}Q_{x_0}^{\mathbb C}(w_0)\ge 0$. However, passing to the limit in \eqref{eq:normalized} yields
$$
\operatorname{Re}Q_{x_0}^{\mathbb C}(w_0)\le -\delta<0.
$$
Contradiction. 
\end{proof}

\noindent
{\bf Acknowledgement}. 
SL is supported by NSFC Projects 12201399, 12331008, and 12411530065, Young Elite Scientists Sponsorship Program by CAST  2023QNRC001, the National Key Research $\&$ Development Programs 2023YFA1010900 and 2024YFA1014900, Shanghai Rising-Star
Program 24QA2703600, and the Shanghai Frontier Research Institute for Modern Analysis.

The research of XS is partially supported by the National Key Research $\&$ Development Programs 2023YFA1010900 and 2024YFA1014900. 

\medskip
\noindent
{\bf Competing Interests Statement}. We declare that there is no conflict of interests involved.

\end{document}